\def\maketitle{
	\begin{center}
	\begin{center}
		{\LARGE \@title}
		\vskip 0.5cm
		{\Large \@author}
		\vskip 0.5cm
		{\@affiliation}
		\vskip 0.5cm
		{\@email}
		\vskip 0.5cm
		{\Large \@date}
		\vspace*{1.5cm}
	\end{center}
	\end{center}
}
\def\email#1{\def\@email{#1}}
\def\affiliation#1{\def\@affiliation{#1}}
\def\r{\rightarrow}
\newcommand{\fdem}{\hspace*{\fill}~$\Box$\par\endtrivlist\unskip}
\newcommand{\E}{\mathbb{E}}     
\renewcommand{\L}{\mathbb{L}}
\newcommand{\N}{\mathbb{N}}     
\newcommand{\Z}{\mathbb{Z}}
\newcommand{\R}{\mathbb{R}}     
\newcommand{\C}{\mathbb{C}} 
\newcommand{\X}{\mathbb{X}}
\renewcommand{\dim}{\mathop{\rm dim}}
\renewcommand{\ker}{\mathop{\rm Ker}}
\renewcommand{\r}{\mathop{\rightarrow}}
\newcommand{\cB}{\mbox{$\cal B$}}
\newcommand{\cE}{\mbox{$\cal E$}}
\newcommand{\cM}{\mbox{$\cal M$}}
\newcommand{\cR}{\mbox{$\cal R$}}
\newcommand{\cX}{\mbox{$\cal X$}}
\newcommand{\cV}{\mbox{$\cal V$}}
\newcommand{\gc}{\widehat\gamma}
\newcommand{\dc}{\widehat\delta}
\newcommand{\rc}{\widehat{r}_{ess}}
\newcommand{\rhoc}{\widehat\rho}
\newenvironment{proof}[1]{\textit{Proof#1.\,}}{\fdem}
\newtheorem{rem}{Remark}[section]
\newtheorem{ex}{Example}
\newtheorem{cor}{Corollary}[section]
\newtheorem{lem}{Lemma}[section]
\newtheorem{pro}[lem]{Proposition}
\title{Spectral analysis of Markov kernels  and application to the convergence rate of discrete random walks}
\author{Lo\"ic \textsc{Herv\'e} and James \textsc{Ledoux} \footnote{postal address : INSA de Rennes, 20 avenue des Buttes de Coesmes, CS 70 839
35708 Rennes Cedex 7  }
}
\affiliation{INSA de Rennes, F-35708, France; IRMAR CNRS-UMR 6625, F-35000, France; Universit\'e Europ\'eenne de Bretagne, France.}
\email{\{Loic.Herve,James.Ledoux\}@insa-rennes.fr}
\begin{document}

\maketitle
\begin{abstract}
Let $\{X_n\}_{n\in\N}$ be a Markov chain on a measurable space $\X$ with transition kernel $P$  and let $V:\X\r[1,+\infty)$. The Markov kernel $P$ is here considered as a linear bounded operator on the weighted-supremum space $\cB_V$ associated with $V$. Then the combination of quasi-compactness arguments with precise analysis of eigen-elements of $P$ allows us to estimate the geometric rate of convergence $\rho_V(P)$ of $\{X_n\}_{n\in\N}$ to its invariant probability measure in operator norm on $\cB_V$. A general procedure to compute $\rho_V(P)$ for discrete Markov random walks with identically distributed  bounded increments is specified.

\end{abstract}
\begin{center}
AMS subject classification : 60J10; 47B07

Keywords : $V$-Geometric ergodicity, Quasi-compactness, Drift condition, 
Birth-and-Death Markov chains. 
\end{center}

\section{Introduction} \label{intro}

Let $(\X,\cX)$ be a measurable space with a $\sigma$-field $\cX$, and let $\{X_n\}_{n\ge 0}$ be a Markov chain
with state space $\X$ and transition kernels $\{P(x,\cdot) : x\in \X\}$.  Let $V:\X\r[1,+\infty)$. Assume that $\{X_n\}_{n\ge 0}$ has an invariant probability measure $\pi$ such that $\pi(V) :=\int_{\X} V(x) \pi(dx) <\infty$. This paper is based on  the connection between spectral properties of the Markov kernel $P$ and the so-called $V$-geometric ergodicity \cite{MeyTwe93} which is  the following convergence property for some constants $c_{\rho}>0$ and $\rho\in(0,1)$: 
\begin{equation} \label{first-V-geo}
\sup_{|f|\le V} \sup_{x\in\X} \frac{\big|\E[f(X_n)\mid X_0=x] - \pi(f)\big|}{V(x)} \le c_{\rho} \,\rho^n.
\end{equation}
Let us introduce the weighted-supremum Banach space $(\cB_V,\|\cdot\|_V)$ composed of  measurable functions $f : \X\r\C$ such that 
$$\|f\|_V  := \sup_{x\in \X} \frac{|f(x)|}{V(x)} < \infty.$$
Then (\ref{first-V-geo}) reads as $\|P^nf- \pi(f) 1_{\X}\|_V \le c_{\rho} \rho^n$ for any $f\in\cB_V$ such that $\|f\|_V\leq1$, 
and there is a great interest in obtaining upper bounds for \textit{the convergence rate $\rho_V(P)$} defined by 
\begin{equation} \label{Def_rhoV}
	\rho_V(P) := \inf\big\{ \rho \in (0,1),\sup_{\|f\|_V\leq1}\|P^nf-\pi(f) 1_{\X}\|_V = O(\rho^n)\big\}.
\end{equation}
For irreducible and aperiodic discrete Markov chains, criteria for the $V$-geometric ergodicity are well-known from the literature using either the equivalence between geometric ergodicity and $V$-geometric ergodicity of $\N$-valued Markov chains \cite[Prop.~2.4]{HorSpi92}, or the strong drift condition. For instance, when $\X:=\N$ (with $\lim_{n}V(n)= +\infty$), the strong drift condition is 
	\[ P V \le \varrho V + b \, 1_{\{0,1,\ldots,n_0\}}
\]
for some $\varrho <1, b <\infty$ and $n_0\in\N$ (see \cite{MeyTwe93}). Estimating $\rho_V(P)$ from the parameters $\varrho, b, n_0$ is a difficult issue. This often leads to  unsatisfactory bounds, except for stochastically monotone~$P$ (see \cite{MeyTwe94,LunTwe96,Bax05} and the references therein).

This work presents a new procedure to study the convergence rate $\rho_V(P)$ under the following weak drift condition 
\begin{equation} \label{cond-D}
\exists N\in\N^*,\ \exists d \in(0,+\infty),\ \exists \delta\in(0,1),\quad P^NV \leq \delta^N\, V + d\, 1_{\X}. \tag{\textbf{WD}}
\end{equation} 
The $V$-geometric ergodicity clearly implies (\ref{cond-D}). Conversely, such a condition with $N=1$ was introduced in \cite[Lem.~15.2.8]{MeyTwe93} as an alternative to the drift condition \cite[(V4)]{MeyTwe93} to obtain the $V$-geometric ergodicity under suitable assumption on $V$. Note that, under Condition~(\ref{cond-D}), the following real number $\delta_V(P)$ is well defined:
\begin{equation*} 
\delta_V(P) := \inf\big\{\delta\in[0,1) : \exists N\in\N^*,\, \exists d\in(0,+\infty),\ P^NV \leq \delta^N\, V + d\, 1_{\X}  \big\}.
\end{equation*}

A spectral analysis of $P$ is presented in Section~\ref{sec-mino} using quasi-compactness. More specifically,  when the Markov kernel $P$ has an invariant probability distribution, the connection between the $V$-geometric ergodicity and the quasi-compactness of $P$ is made explicit in Proposition~\ref{CNS-qc-Vgeo}. Namely, $P$ is $V$-geometrically ergodic if and only if $P$ is a power-bounded quasi-compact operator on $\cB_V$ for which $\lambda=1$ is a simple eigenvalue and the unique eigenvalue of modulus one. In this case, if  $r_{ess}(P)$ denotes the essential spectral radius  of $P$ on $\cB_V$ (see (\ref{def-q-c})) and if $\cV$ denotes the set of eigenvalues $\lambda$ of $P$ such that $r_{ess}(P)<|\lambda|<1$, then the convergence rate $\rho_V(P)$ is given by (Proposition~\ref{CNS-qc-Vgeo}): 
\begin{equation} \label{intro-gap-r-ess}
\rho_V(P)=r_{ess}(P)\ \text{ if }\ \cV=\emptyset \quad \text{ and } \quad \rho_V(P)=\max\{|\lambda|,\, \lambda\in\cV\}\ \text{ if }\ \cV\neq\emptyset.
\end{equation} 
Interesting bounds for generalized eigenfunctions $f\in\cB_V\cap\ker(P-\lambda I)^p$ associated with $\lambda\in\cV$ are presented in Proposition~\ref{pro-tail-fct-propre}.  
Property~(\ref{intro-gap-r-ess}) is relevant to study the convergence rate $\rho_V(P)$ provided that, first an accurate bound of $r_{ess}(P)$ is known, second the above set $\cV$ is available. Bounds of $r_{ess}(P)$ related to drift conditions can be found in \cite{Wu04} and \cite{HerLed14} under various assumptions (see Subsection~\ref{sec-qc-V-geo}). In view of our applications, let us just mention that $r_{ess}(P)= \delta_V(P)$ in case $\X:=\N$ and $\lim_n V(n) = +\infty$ (see Proposition~\ref{cor-qc-bis}). However, even if the state space is discrete, finding the above set $\cV$ is difficult. 

In Section~\ref{sub-sec-countable}, the above spectral analysis is applied to compute the rate of convergence $\rho_V(P)$ of discrete Random Walks (RW). In particular, a complete solution is presented for RWs with identically distributed (i.d.) bounded increments.  In fact, Proposition~\ref{pro-rate-HRW} allows us to formulate an algebraic  procedure based on polynomial eliminations providing $\rho_V(P)$ (see  Corollary~\ref{cor-step-3}). To the best of our knowledge, this general result is new. Note that it requires neither reversibility  nor stochastic monotonicity of $P$.  

This procedure is illustrated in Section~\ref{rate-general-procedure}. First we consider the case of birth-and-death Markov kernel $P$ defined by 
$P(0,0):=a$ and $P(0,1):=1-a$ for some $a\in(0,1)$ and by 
\begin{equation*} \label{intro-hyp-ref-rand}
 \forall n\ge 1,\ P(n,n-1) :=p,\quad P(n,n) := r,\quad P(n,n+1) := q, 
\end{equation*} 
where $p,q,r\in[0,1]$ are such that $p+r+q=1$, $p>q>0$. 
Explicit formula for $\rho_V(P)$ with respect to $V:=\{(p/q)^{n/2}\}_{n\in\N}$ is given in Proposition~\ref{BD}. When $r:=0$, such a result has been obtained for $a< p$ in \cite{RobTwe99} and \cite[Ex.~8.4]{Bax05} using Kendall's theorem, and for $a\geq p$ in \cite{LunTwe96} using the stochastic monotony of $P$. Our method gives a unified and simpler computation of  $\rho_V(P)$ which moreover encompasses the case $r\neq0$. For general RWs with i.d.~bounded increments, the elimination procedure requires to use symbolic computations. The second example illustrates this point with the non reversible RW defined by 
\begin{equation*} 
\forall n\geq 2,\ P(n,n-2) = a_{-2},\ P(n,n-1) = a_{-1},\ P(n,n) = a_{0}, \ P(n,n+1) = a_1
\end{equation*}
for any nonnegative $a_i$ satisfying $a_{-2}+ a_{-1} + a_{0} + a_1 =1$, $a_{-2}>0$, $2a_{-2} + a_{-1} > a_1>0$, and for any finitely many boundary transition probabilities.
In Section~\ref{sect-unbounded}, specific examples of RWs on $\X:=\N$ with unbounded increments considered in the literature are investigated.

To conclude this introduction, we mention a point which can be source of confusion in a first reading. In this paper, we are concerned with the convergence rate (\ref{Def_rhoV}) with respect to some weighted-supremun Banach space $\cB_V$. Thus, we do not consider here the decay parameter or the convergence rate of ergodic Markov chains in the usual Hilbert space $\L^2(\pi)$  which is related to spectral properties of the transition kernel with respect to this space. In particular,  for Birth-and-Death Markov chains, we can not compare our results with those of \cite{DooSch95} on the $\ell^2(\pi)$-spectral gap  and the decay parameter. A detailed discussion is provided in Remark~\ref{l2-spectral-gap}. 

\section{Quasi-compactness on $\cB_V$ and $V$-geometric ergodicity} \label{sec-mino}

We assume that $P$ satisfies (\ref{cond-D}). Then $P$ continuously acts on $\cB_V$, and iterating (\ref{cond-D}) shows that $P$ is power-bounded on $\cB_V$, namely $\sup_{n\geq1}\|P^n\|_V<\infty$,  where $\|\cdot\|_V$ also stands for the operator norm on $\cB_V$. Thus we have $r(P):=\lim_n\|P^n\|_V^{1/n}=1$ since $P$ is Markov. 
\subsection{From quasi-compactness on $\cB_V$ to $V$-geometric ergodicity} \label{sec-qc-V-geo}

Let $I$ denote the identity operator on $\cB_V$. Recall that $P$ is said to be quasi-compact on $\cB_V$ if there exist $r_0\in(0,1)$ and $m\in\N^*$, $\lambda_i\in\C$, $p_i\in\N^*$ ($\, i=1,\ldots,m$) such that: 
\begin{subequations}
\begin{equation} \label{noyit}
\cB_V = \overset{m}{\underset{i=1}{\oplus}} \ker(P - \lambda_i I)^{p_i}\, \oplus H, 
\end{equation}
where the $\lambda_i$'s are such that 
\begin{equation} \label{noyit-lambda}
|\lambda_i| \geq r_0\quad \text{ and } \quad 1\le \dim\ker(P-\lambda_i I)^{p_i} < \infty,
\end{equation}
and $H$ is a closed $P$-invariant subspace such that 
\begin{equation} \label{noyit-H}
\inf_{n\geq1}\big(\sup_{h\in H,\, \|h\|\leq1}\|P^nh\|\big)^{1/n} < r_0.\end{equation}
\end{subequations}
Concerning the essential spectral radius of $P$, denoted by $r_{ess}(P)$, here it is enough to have in mind that, if $P$ is quasi-compact on $\cB_V$, then we have (see for instance \cite{Hen93})
\begin{equation} \label{def-q-c} 
r_{ess}(P) := \inf\big\{r_0\in(0,1) \text{ such that~ (\ref{noyit})-(\ref{noyit-H}) hold}  \big\}.
\end{equation}

As mentioned in Introduction, the essential spectral radius of Markov kernels acting on $\cB_V$ is studied in \cite{Wu04,HerLed14}. For instance, under Condition (\ref{cond-D}), the following result is proved in \cite{HerLed14}: if $P^\ell$ is compact from $\cB_0$ to $\cB_V$ for some $\ell\geq 1$, where $(\cB_0,\|\cdot\|_0)$ is the  Banach space composed of bounded measurable functions $f : \X\r\C$ equipped with the supremum norm $\|f\|_0:=\sup_{x\in\X}|f(x)|$, then $P$ is quasi-compact on $\cB_V$ with 
$$r_{ess}(P) \leq \delta_V(P).$$  
Moreover, equality $r_{ess}(P)= \delta_V(P)$ holds in many situations, in particular in the discrete state case with $V(n)\r \infty$ (see Proposition~\ref{cor-qc-bis}). 

Next we explicit a result which makes explicit the relationship between the quasi-compactness of $P$ and the $V$-geometric ergodicity of the Markov chain $\{X_n\}_{n \in\N}$ with transition kernel $P$. Moreover, we provide an explicit formula for $\rho_V(P)$ in terms of the spectral elements of $P$. Note that for any ${r_0}\in (r_{ess}(P),1)$, the set of all the eigenvalues of $\lambda$ of $P$ such that ${r_0}\leq|\lambda|\leq1$ is finite (use (\ref{def-q-c})). 
\begin{pro} \label{CNS-qc-Vgeo}
Let $P$ be a transition kernel which has an invariant probability measure $\pi$ such that $\pi(V)<\infty$. The two following assertions are equivalent:
\begin{enumerate}[(a)]
\item $P$ is $V$-geometrically ergodic.
\item $P$ is a power-bounded quasi-compact operator on $\cB_V$, for which $\lambda=1$ is a simple eigenvalue  (i.e.~$\ker(P-I) = \C\cdot 1_{\X}$) and the unique eigenvalue of modulus one.
\end{enumerate}
Under any of these conditions, we have $\rho_V(P) \ge r_{ess}(P)$. In fact, for ${r_0}\in(r_{ess}(P),1)$, denoting the set of all the eigenvalues $\lambda$ of $P$ such that ${r_0}\leq|\lambda|<1$ by $\cV_{r_0}$, we have:
\begin{itemize}
    \item either $\rho_V(P) \leq {r_0}$ when $\cV_{r_0}=\emptyset$,
    \item or $\rho_V(P) = \max\{|\lambda|,\, \lambda\in\cV_{r_0}\}$ when $\cV_{r_0}\neq\emptyset$.
\end{itemize}
Moreover, if  $\cV_{r_0}=\emptyset$ for all ${r_0}\in(r_{ess}(P),1)$, then $\rho_V(P) =r_{ess}(P)$.
\end{pro}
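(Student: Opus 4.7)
The key preliminary observation is that $\pi(V)<\infty$ yields $|\pi(f)|\le\pi(V)\,\|f\|_V$, so $\pi$ extends continuously to $\cB_V$ and $\Pi f:=\pi(f)\,1_{\X}$ is a bounded rank-one projection on $\cB_V$ satisfying $P\Pi=\Pi P=\Pi^2=\Pi$ (using invariance of $\pi$ and $P1_{\X}=1_{\X}$). The implication (a)$\Rightarrow$(b) is then almost immediate: $V$-geometric ergodicity reads $\|P^n-\Pi\|_V\le c_\rho\rho^n$, and the commutation relations give $(P-\Pi)^n=P^n-\Pi$ for $n\ge 1$, so the spectral radius of $P-\Pi$ on $\cB_V$ is at most $\rho<1$. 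The splitting $\cB_V=\C\cdot 1_{\X}\oplus\ker\Pi$ into $P$-invariant summands, with first component finite-dimensional and $P$ acting on the second with spectral radius $\le\rho$, is the quasi-compact decomposition (\ref{noyit})--(\ref{noyit-H}) with $\lambda=1$ simple and the unique peripheral eigenvalue.

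For (b)$\Rightarrow$(a), quasi-compactness combined with the hypothesis on $\lambda=1$ produces $\cB_V=\C\cdot 1_{\X}\oplus H'$, where $H':=\ker\Pi_1$ and $\Pi_1$ is the spectral projection onto $\ker(P-I)=\C\cdot 1_{\X}$. The only peripheral spectrum of $P$ is $\{1\}$: outside the essential spectrum, points of $\sigma(P)$ are isolated eigenvalues of finite multiplicity, finitely many in any annulus $\{|z|\ge r_0>r_{ess}(P)\}$, so that the modulus-one part of $\sigma(P)$ consists only of isolated eigenvalues, which by hypothesis reduce to $\{1\}$. Hence $P_{|H'}$ has spectral radius $<1$. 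Writing $\Pi_1 f=c(f)\,1_{\X}$ for some continuous linear form $c$ on $\cB_V$, the residual $f-c(f)\,1_{\X}=f-\Pi_1 f$ lies in $H'$, so $\|P^n(f-c(f)\,1_{\X})\|_V\to 0$; applying the continuous functional $\pi$ and using $\pi P=\pi$ gives $\pi(f)=c(f)$. Thus $\Pi_1=\Pi$ and $\|P^n-\Pi\|_V\to 0$ geometrically, i.e.~(a).

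For the rate formula, the quasi-compact decomposition gives, for any $r_0\in(r_{ess}(P),1)$,
\begin{equation*}
P^n-\Pi \;=\; \sum_{\lambda\in\cV_{r_0}}\lambda^n\,T_\lambda(n) \;+\; R_n,
\end{equation*}
where $T_\lambda(n)$ is a polynomial in $n$ acting on the finite-dimensional generalized eigenspace at $\lambda$ and $\|R_n\|_V=O(r_0^n)$. Absorbing polynomial factors into $(|\lambda|+\varepsilon)^n$ for arbitrarily small $\varepsilon>0$ gives the upper bounds $\rho_V(P)\le r_0$ when $\cV_{r_0}=\emptyset$ and $\rho_V(P)\le\max\{|\lambda|:\lambda\in\cV_{r_0}\}$ otherwise. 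For the matching lower bound when $\cV_{r_0}\neq\emptyset$, any eigenfunction $f\in\ker(P-\lambda I)$ for $\lambda\in\cV_{r_0}$ satisfies $\pi(f)=\pi(Pf)=\lambda\pi(f)$, hence $\pi(f)=0$ since $\lambda\neq 1$, so $\|P^nf-\pi(f)\,1_{\X}\|_V=|\lambda|^n\,\|f\|_V$ and $\rho_V(P)\ge|\lambda|$. Finally, since $\Pi$ is compact, the essential spectral radius is preserved under the perturbation $P\mapsto P-\Pi$, and combining $\rho_V(P)=r(P-\Pi)\ge r_{ess}(P-\Pi)=r_{ess}(P)$ yields the global inequality; when $\cV_{r_0}=\emptyset$ for every $r_0>r_{ess}(P)$, letting $r_0\downarrow r_{ess}(P)$ in the upper bound closes the equality $\rho_V(P)=r_{ess}(P)$.

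The main technical obstacle is the identification $\Pi_1=\Pi$ in the (b)$\Rightarrow$(a) step, bootstrapping from the abstract rank-one form of the spectral projection furnished by quasi-compactness to its probabilistic identification, which crucially uses both the invariance of $\pi$ and its continuity on $\cB_V$ (itself a consequence of $\pi(V)<\infty$). A secondary point is the absorption of Jordan-block polynomial factors from the generalized eigenspaces into an arbitrarily small enlargement of the eigenvalue modulus; this is automatic because $\rho_V(P)$ is defined through $O(\rho^n)$ with a strict infimum.
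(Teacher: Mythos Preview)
Your argument is correct and aligns with the paper's sketch; note that the paper does not actually prove this proposition in full, giving only the (a)$\Rightarrow$(b) direction via $H:=\{f\in\cB_V:\pi(f)=0\}$ (exactly your $\ker\Pi$) and deferring the remaining details to \cite{GuiHerLed11} and \cite{KonMey12}. Your identification $\Pi_1=\Pi$ through the invariance and $\cB_V$-continuity of $\pi$ is precisely the kind of argument those references supply, and your treatment of the rate formula via the Riesz decomposition together with the eigenvector lower bound $\|P^nf-\pi(f)1_{\X}\|_V=|\lambda|^n\|f\|_V$ is the standard route.

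One small omission deserves a sentence: when you assert that the Riesz spectral projection $\Pi_1$ at the isolated eigenvalue $1$ has range $\ker(P-I)=\C\cdot 1_{\X}$, you are implicitly using that the algebraic and geometric multiplicities of $\lambda=1$ coincide. Hypothesis (b) only states geometric simplicity, so you need power-boundedness here: if some $g$ satisfied $(P-I)g=c\,1_{\X}$ with $c\neq 0$, then $P^ng=g+nc\,1_{\X}$ would violate $\sup_n\|P^n\|_V<\infty$. This is routine but should be said, since without it $\Pi_1 f$ need not take the scalar form $c(f)\,1_{\X}$ that your bootstrap argument requires.
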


The $V$-geometric ergodicity of $P$ obviously implies that $P$ is quasi-compact on $\cB_V$ with $\rho_V(P) \geq r_{ess}(P)$ (see e.g.~\cite{KonMey03}). This follows from (\ref{def-q-c}) using $H:=\{f\in\cB_V : \pi(f)=0\}$ in (\ref{noyit})-(\ref{noyit-H}). The property that $P$ has a spectral gap on $\cB_V$ in the recent paper \cite{KonMey12} corresponds here to the quasi-compactness of $P$ (which is a classical terminology in spectral theory). The spectral gap in \cite{KonMey12} corresponds to the value $1-\rho_V(P)$. Then, \cite[Prop.~1.1]{KonMey12}) is another formulation, under $\psi$-irreducibility and aperiodicity assumptions,  of the equivalence of properties (a) and (b) in Proposition~\ref{CNS-qc-Vgeo} (see also \cite[Lem.~2.1]{KonMey12}). Details on the proof of Proposition~\ref{CNS-qc-Vgeo} are provided in \cite{GuiHerLed11}. 
For general quasi-compact Markov kernels on $\cB_V$, the result \cite[Th.~4.6]{Wu04} also provides interesting additional material on peripheral eigen-elements. 
The next subsection completes the previous  spectral description by providing bounds for the  generalized eigenfunctions associated with eigenvalues $\lambda$  such that $\delta \leq |\lambda| \leq 1$, with $\delta$ given in (\ref{cond-D}).
%
\subsection{Bound on generalized eigenfunctions of $P$} \label{subsec-taille-fct-propre}
%
\begin{pro} \label{pro-tail-fct-propre}
Assume that the weak drift condition \emph{(\ref{cond-D})} holds true.
If $\lambda\in\C$ is such that $\delta \leq |\lambda| \leq 1$, with $\delta$ given in \emph{(\ref{cond-D})}, and if $f\in\cB_V\cap\ker(P-\lambda I)^p$ for some $p\in\N^*$, then there exists $c\in(0,+\infty)$ such that 
$$|f| \leq c\, V^{\frac{\ln|\lambda|}{\ln\delta}}\, (1+\ln V)^{\frac{p(p-1)}{2}}.$$  
\end{pro}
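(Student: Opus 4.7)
The plan is to reason by induction on $p$, exploiting the binomial identity
$$P^n f \;=\;\sum_{k=0}^{p-1}\binom{n}{k}\lambda^{n-k}h_k,\qquad n\ge p-1,$$
where $h_k:=(P-\lambda I)^k f$; this is valid because $P$ and $\lambda I$ commute and $h_k=0$ for $k\ge p$. Solving for $f=h_0$ and taking moduli produces the pointwise inequality
\begin{equation*}
|f| \;\le\; |\lambda|^{-n}\,P^n|f|\;+\;\sum_{k=1}^{p-1}\binom{n}{k}|\lambda|^{-k}|h_k|. \qquad(\star)
\end{equation*}
Before starting the induction, I would iterate \emph{(\ref{cond-D})} in blocks of length $N$, using that $\|P^r\|_V<\infty$ for $0\le r<N$, to obtain constants $A,B>0$ with $P^n V\le A\delta^n V+B$ for every $n\in\N$. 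Throughout, set $\alpha:=\ln|\lambda|/\ln\delta\in[0,1]$.

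For the base case $p=1$, $f$ is an eigenfunction, so $|\lambda|^n|f|\le P^n|f|\le\|f\|_V(A\delta^n V+B)$. Choosing the \emph{point-dependent} integer $n=n(x):=\lceil\ln V(x)/|\ln\delta|\rceil$ makes $\delta^{n(x)}V(x)$ bounded, yielding $|f(x)|\le C|\lambda|^{-n(x)}$; since the same choice gives $|\lambda|^{-n(x)}\le C'V(x)^\alpha$ by definition of $\alpha$, this recovers the claimed $|f|\le cV^\alpha$ with no logarithmic factor.

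For the inductive step, observe that each $h_k$ with $1\le k\le p-1$ lies in $\ker(P-\lambda I)^{p-k}$, so the inductive hypothesis gives $|h_k|\le c_k V^\alpha(1+\ln V)^{(p-k)(p-k-1)/2}$. Plugging these bounds into $(\star)$ with $n(x):=\max\bigl(p-1,\lceil\ln V(x)/|\ln\delta|\rceil\bigr)$ and using $\binom{n}{k}\le n^k=O((\ln V)^k)$, the first term of $(\star)$ contributes $O(V^\alpha)$ as in the base case, while the $k$-th summand is of order $V^\alpha(1+\ln V)^{g(k)}$ with $g(k):=k+(p-k)(p-k-1)/2$. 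The conclusion then follows from the elementary identity
$$\frac{p(p-1)}{2}-g(k)\;=\;\frac{k(2p-k-3)}{2}\;\ge\;0\qquad(1\le k\le p-1,\ p\ge 2),$$
which shows every exponent that appears is at most $p(p-1)/2$.

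The main obstacle is the proper selection of the point-dependent integer $n(x)$: it must simultaneously be at least $p-1$ so the binomial expansion terminates at $k=p-1$, large enough to make $\delta^{n(x)}V(x)$ of order one, and small enough that $|\lambda|^{-n(x)}$ remains of order $V(x)^\alpha$. Once this balance is set, the combinatorial inequality $g(k)\le p(p-1)/2$ makes the induction close cleanly, and the limiting cases $|\lambda|=\delta$ (i.e.\ $\alpha=1$) and $|\lambda|=1$ (i.e.\ $\alpha=0$) are absorbed without difficulty.
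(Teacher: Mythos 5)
Your proof is correct and follows essentially the same route as the paper's: iterate the drift condition to get $P^nV\le A\delta^nV+B$, expand $P^n=(P-\lambda I+\lambda I)^n$ on $\ker(P-\lambda I)^p$, choose a point-dependent $n(x)\asymp\ln V(x)/|\ln\delta|$, and induct on $p$. The one place you differ is in the bookkeeping of the inductive step: you invoke the hypothesis at the exact nilpotency level $p-k$ for each $h_k=(P-\lambda I)^kf$, getting the exponent $(p-k)(p-k-1)/2$, and then close the induction by verifying the combinatorial inequality $k+(p-k)(p-k-1)/2\le p(p-1)/2$; the paper instead observes that all the $f_k$ lie in $\ker(P-\lambda I)^p$, applies the hypothesis with the uniform exponent $p(p-1)/2$, and absorbs the whole sum $\sum_{k\le p}\binom{n}{k}=O(n^p)$ into a single factor $(\ln V)^p$, giving $p(p-1)/2+p=p(p+1)/2$ at level $p+1$. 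Your version is marginally sharper per term but lands on the same final exponent; the two are equivalent in substance.
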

Thus, if $\lambda$ is an eigenvalue such that $|\lambda|=1$, then any associated eigenfunction $f$ is bounded on $\X$. By contrast, if $|\lambda|$ is close to $\delta_V(P)$, then $|f| \leq c\, V^{\beta(\lambda)}$ with $\beta(\lambda)$ close to~1. The proof of Proposition~\ref{pro-tail-fct-propre} is based on the following lemma.
\begin{lem} \label{lem-ineq-lem-tail-fct}
Let $\lambda\in\C$ be such that $\delta \leq |\lambda| \leq 1$. Then 
\begin{equation} \label{ineq-lem-tail-fct}
\forall f\in\cB_V,\ \exists c\in(0,+\infty),\ \forall x\in\X,\quad |\lambda|^{-n(x)} \big|(P^{n(x)}f)(x)\big|  \leq c\, V(x)^{\frac{\ln|\lambda|}{\ln\delta}}
\end{equation}
with, for any $x\in\X$, $n(x) := \big\lfloor \frac{-\ln V(x) }{\ln\delta}\big\rfloor$ where $\lfloor \cdot\rfloor$ denotes the integer part function. 
\end{lem}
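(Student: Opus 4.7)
The plan is to run the weak drift inequality forward in time, then to pick $n=n(x)$ precisely so that the ``drift-in'' term $\delta^n V(x)$ is of order one, while carefully tracking what the factor $|\lambda|^{-n}$ does.

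First I would iterate (\ref{cond-D}). Applying $P^N$ repeatedly to itself and using $P^N 1_\X = 1_\X$ gives, by a telescoping sum, $P^{kN} V \le \delta^{kN}\, V + \frac{d}{1-\delta^N}\, 1_\X$ for every $k\in\N$. To interpolate between multiples of $N$, I would write an arbitrary $n\in\N$ as $n = kN + r$ with $0\le r <N$ and apply $P^r$ to the inequality above, using $P^r V \le M\, V$ with $M := \max_{0\le r<N}\|P^r\|_V < \infty$ (which is finite since $P$ is power-bounded under (\ref{cond-D})) and $P^r 1_\X = 1_\X$. Reabsorbing the loss $\delta^{-r}\le\delta^{-(N-1)}$ in the constant, this yields an inequality of the form
\begin{equation*}
\forall n\in\N,\qquad P^n V \le C_1\, \delta^n\, V + C_2\, 1_\X
\end{equation*}
with constants $C_1,C_2\in(0,+\infty)$ depending only on $N,d,\delta,M$.

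Next, for $f\in\cB_V$ one has $|P^n f|\le P^n|f|\le\|f\|_V\, P^n V$, hence pointwise
\begin{equation*}
|\lambda|^{-n}\,|(P^n f)(x)| \le \|f\|_V\Bigl(C_1\,(\delta/|\lambda|)^n\, V(x) + C_2\,|\lambda|^{-n}\Bigr).
\end{equation*}
Now I would specialize $n=n(x)=\lfloor -\ln V(x)/\ln\delta\rfloor$. Two elementary consequences of this choice (and of $\ln\delta<0$, $\ln V(x)\ge 0$) are, on the one hand, $\delta^{n(x)}V(x) < 1/\delta$, which instantly gives $(\delta/|\lambda|)^{n(x)} V(x) < |\lambda|^{-n(x)}/\delta$; and on the other hand, using $n(x) \le -\ln V(x)/\ln\delta$ together with $\ln|\lambda|\le 0$ (so that multiplying preserves direction),
\begin{equation*}
-n(x)\ln|\lambda| \le \frac{-\ln V(x)}{\ln\delta}\,(-\ln|\lambda|) = \ln V(x)\cdot\frac{\ln|\lambda|}{\ln\delta},
\end{equation*}
whence $|\lambda|^{-n(x)} \le V(x)^{\ln|\lambda|/\ln\delta}$. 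Plugging both estimates back, the right-hand side becomes $\|f\|_V\,(C_1/\delta + C_2)\, V(x)^{\ln|\lambda|/\ln\delta}$, which is exactly (\ref{ineq-lem-tail-fct}) with $c := \|f\|_V(C_1/\delta+C_2)$.

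The one nontrivial point is really the calibration: everything hinges on the fact that $n(x)$ is chosen so that $\delta^{n(x)}V(x)$ stays in a bounded window $[1,1/\delta)$, which balances the two contributions (drift-in and residue) from the iterated inequality. The bookkeeping with the negative logarithms is the only place where a sign mistake would wreck the exponent $\ln|\lambda|/\ln\delta$, so I would be careful to justify each inequality reversal; but no deeper ingredient than the iterated weak drift condition and power-boundedness of $P$ on $\cB_V$ is required.
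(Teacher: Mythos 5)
Your proposal is correct and follows essentially the same route as the paper: iterate (WD) to control $P^{kN}V$, absorb the residue $r$ in $n=kN+r$ into a constant using power-boundedness, and then choose $n=n(x)$ so that $\delta^{n(x)}V(x)$ is bounded while $|\lambda|^{-n(x)}\le V(x)^{\ln|\lambda|/\ln\delta}$. The only cosmetic difference is that you push $P^r$ through the drift inequality (via $\|P^r\|_V$) whereas the paper applies the iterated bound to $g=P^rf$ (via $\max_{0\le\ell<N}\|P^\ell f\|_V$), and you isolate the two calibration inequalities as named observations rather than running one chained exponential computation; the underlying mathematics is identical.
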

\begin{proof}{}
First note that the iteration of (\ref{cond-D}) gives 
\begin{equation*} \label{itération-cond-D}
\forall k\geq 1,\quad P^{kN}V \leq \delta^{kN}\, V + d\big(\sum_{j=0}^{k-1}\delta^{jN}\big)\, 1_{\X} \leq \delta^{kN}\, V +  
\frac{d}{1-\delta^N}\, 1_{\X}.
\end{equation*}
Let $g\in\cB_V$ and $x\in\X$. Using the last inequality, the positivity of $P$ and $|g|\leq \|g\|_V\, V$, we obtain with $b := d/(1-\delta^N)$: 
\begin{equation} \label{val-pro}
\forall k\geq1,\quad |(P^{kN}g)(x)| \leq (P^{kN}|g|)(x) \leq \|g\|_V\, (P^{kN}V)(x) \leq  \|g\|_V\big(\delta^{kN}V(x) + b\big).
\end{equation}
The previous inequality is also fulfilled with $k=0$. Next, let $f\in\cB_V$ and $n\in\N$. Writing $n = kN+r$, with $k\in\N$ and $r\in\{0,1,\ldots,N-1\}$, and applying (\ref{val-pro}) to $g:=P^rf$, we obtain with $\xi:=\max_{0\leq\ell\leq N-1}\|P^\ell f\|_V$ (use $P^nf = P^{kN}(P^rf)$): 
\begin{equation} \label{val-pro-bis}
\big|(P^nf)(x)\big| \leq \xi\big[\delta^{kN}V(x) + b\big] \leq 
\xi\big[\delta^{-r}\big( \delta^{n}V(x) + b\big)\big] \leq \xi\,\delta^{-N}\big(\delta^{n}V(x) + b\big).
\end{equation}
Using the inequality $$-\frac{\ln V(x)}{\ln\delta} -1 \leq n(x) \leq -\frac{\ln V(x)}{\ln\delta}$$ 
and the fact that  $\ln\delta \leq \ln|\lambda|\leq 0$, Inequality (\ref{val-pro-bis}) with $n:=n(x)$ gives: 
\begin{eqnarray*}
|\lambda|^{-n(x)} \big|(P^{n(x)}f)(x)\big| &\leq& \xi\, \delta^{-N}\bigg(\big(\delta|\lambda|^{-1}\big)^{n(x)}\, V(x) + b\, |\lambda|^{-n(x)} \bigg) \\
& &= \xi\, \delta^{-N}\bigg(e^{n(x)(\ln\delta - \ln|\lambda|)}\, e^{\ln V(x)} + b\, e^{-n(x)\ln|\lambda|} \bigg) \\
&\leq&  \xi\, \delta^{-N}\bigg(e^{(\frac{\ln V(x) }{\ln\delta}+1)\, (\ln|\lambda|-\ln\delta)}\, e^{\ln V(x)} + 
b\, e^{\frac{\ln V(x)}{\ln\delta}\ln|\lambda|} \bigg)  \\
&& = \xi\, \delta^{-N}\bigg(e^{\frac{\ln|\lambda|}{\ln\delta}\ln V(x)}\, e^{\ln|\lambda|-\ln\delta}\,  + b\, 
V(x)^{\frac{\ln|\lambda|}{\ln\delta}}\bigg) \\
&& =\xi\, \delta^{-N} \big(e^{\ln|\lambda|-\ln\delta}+b\big)\, V(x)^{\frac{\ln|\lambda|}{\ln\delta}}.
\end{eqnarray*}
This gives Inequality~(\ref{ineq-lem-tail-fct}) with $c:=\xi\, \delta^{-N} (e^{\ln|\lambda|-\ln\delta}+b)$. 
\end{proof}
\begin{proof}{ of Proposition~\ref{pro-tail-fct-propre}}
 If $f\in\cB_V\cap\ker(P-\lambda I)$, then $|\lambda|^{-n(x)} |(P^{n(x)}f)(x)| = |f(x)|$, so that (\ref{ineq-lem-tail-fct}) gives the expected conclusion when $p=1$. Next, let us  proceed by induction. Assume that the conclusion of Proposition~\ref{pro-tail-fct-propre} holds for some $p\geq 1$. Let $f\in\cB_V\cap\ker(P-\lambda I)^{p+1}$. We can write 
\begin{equation} \label{inter-tail-1}
P^nf = (P-\lambda I+\lambda I)^n f = \lambda^{n}\, f + \sum_{k=1}^{\min(n,p)} \binom{n}{k} \lambda^{n-k}\,  (P-\lambda I)^k f.
\end{equation}
For $k\in\{1,\ldots,p\}$, we have $f_k := (P-\lambda I)^k f \in\ker(P-\lambda I)^{p+1-k} \subset \ker(P-\lambda I)^{p}$, thus we have from the induction hypothesis : 
\begin{equation} \label{inter-tail-2}
\exists c'\in(0,+\infty),\ \forall k\in\{1,\ldots,p\}, \ \forall x\in\X, \quad |f_k(x)| \leq c'\, V(x)^{\frac{\ln|\lambda|}{\ln\delta}}\, (1+\ln V(x))^{\frac{p(p-1)}{2}}.
\end{equation}
Now, we obtain from (\ref{inter-tail-1}) (with $n:=n(x)$), (\ref{inter-tail-2}) and Lemma~\ref{lem-ineq-lem-tail-fct} that for all $x\in\X$: 
\begin{eqnarray*}
|f(x)| &\leq& |\lambda|^{-n(x)} \big|(P^{n(x)}f)(x)\big| + c'\, V(x)^{\frac{\ln|\lambda|}{\ln\delta}}\, (1+\ln V(x))^{\frac{p(p-1)}{2}}\, |\lambda|^{-\min(n,p)} \sum_{k=1}^{\min(n,p)} \binom{n(x)}{k} \\
&\leq&  c\, V(x)^{\frac{\ln|\lambda|}{\ln\delta}} + c_1\, V(x)^{\frac{\ln|\lambda|}{\ln\delta}}\, (1+\ln V(x))^{\frac{p(p-1)}{2}}\, n(x)^p \\
&\leq& c_2 V(x)^{\frac{\ln|\lambda|}{\ln\delta}}\, (1+\ln V(x))^{\frac{p(p-1)}{2} + p}
\end{eqnarray*}
with some constants $c_1,c_2\in(0,+\infty)$ independent of $x$. 
This gives the expected result.
\end{proof}
%

\section{Spectral properties of discrete Random Walks} \label{sub-sec-countable}
In the sequel, the state space $\X$ is discrete. For the sake of simplicity, we assume that $\X:=\N$. Let $P=(P(i,j))_{i,j\in\N^2}$ be a Markov kernel on $\N$. The function $V:\N\r[1,+\infty)$ is assumed to satisfy 
$$\lim_nV(n) = +\infty \quad \text{ and } \quad \sup_{n\in\N}\frac{(PV)(n)}{V(n)}<\infty.$$ 

The first focus is on the estimation of $r_{ess}(P)$ from Condition~(\ref{cond-D}). 
\begin{pro} \label{cor-qc-bis} 
Let $\X:=\N$. 
The two following conditions are equivalent: 
\begin{enumerate}[(a)]
	\item Condition~\emph{(\ref{cond-D})} holds with $V$; 
	\item $\displaystyle L := \inf_{N\geq 1}(\ell_N)^{\frac{1}{N}}< 1$ where 
	$\ell_N := \displaystyle \limsup_{n\r+\infty}\frac{(P^NV)(n)}{V(n)}$.
\end{enumerate}
In this case, $P$ is power-bounded and quasi-compact on $\cB_V$ with $r_{ess}(P) = \delta_V(P) = L$.
\end{pro}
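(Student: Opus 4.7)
The statement decouples into three claims: the equivalence (a) $\Leftrightarrow$ (b), the identity $\delta_V(P) = L$, and the spectral identity $r_{ess}(P) = L$. The first two go together and are elementary; the third combines an upper bound from \cite{HerLed14} with a genuinely new lower bound.

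For (a) $\Rightarrow$ (b), I would fix $(\delta, N, d)$ realizing (\ref{cond-D}), iterate as in (\ref{val-pro}) to get $P^{kN}V \leq \delta^{kN}V + b\cdot 1_\X$ with $b := d/(1-\delta^N)$, divide by $V(n)$, and let $n\r+\infty$; the hypothesis $V(n) \to +\infty$ kills the additive constant and yields $\ell_{kN} \leq \delta^{kN}$, hence $L \leq \ell_{kN}^{1/(kN)} \leq \delta$. Passing to the infimum over admissible $\delta$ gives $L \leq \delta_V(P) < 1$. Conversely, given $L < 1$ and any $\delta \in (L, 1)$, pick $N$ with $\ell_N < \delta^N$; by the definition of $\limsup$, $(P^NV)(n) \leq \delta^N V(n)$ holds outside a finite set $\{0,\ldots,n_0-1\}$, on which $(P^NV)(n)$ is finite by iterating $\sup_n (PV)(n)/V(n) < +\infty$, so a sufficiently large constant $d$ absorbs the defect via $d\cdot 1_\X$. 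This yields (\ref{cond-D}) with parameter $\delta$, hence $\delta_V(P) \leq L$, closing both the equivalence and the identity $\delta_V(P) = L$.

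For the upper bound $r_{ess}(P) \leq L$, I would invoke the criterion of \cite{HerLed14} recalled in Subsection~\ref{sec-qc-V-geo}: its hypothesis demands that some $P^\ell$ be compact from $\cB_0$ to $\cB_V$, and this is immediate here because the inclusion $\iota: \cB_0 \hookrightarrow \cB_V$ is itself compact. Indeed, since $V(n) \to +\infty$ the finite-rank truncations $\iota_k(f) := f\cdot 1_{\{0,\ldots,k\}}$ satisfy $\|\iota - \iota_k\| \leq \sup_{n>k}V(n)^{-1} \to 0$, and $P$ is a contraction on $\cB_0$, so the composition $P^\ell = \iota \circ P^\ell|_{\cB_0}$ is compact from $\cB_0$ to $\cB_V$. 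Power-boundedness was already observed at the start of Section~\ref{sec-mino}, and the criterion then gives $r_{ess}(P) \leq \delta_V(P) = L$.

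For the reverse inequality $r_{ess}(P) \geq L$, my plan is to pass to the quotient $\cC := \cB_V/\cB_V^0$, where $\cB_V^0 := \{f \in \cB_V : \lim_n f(n)/V(n) = 0\}$. One checks that $\cB_V^0$ is closed in $\cB_V$ and $P$-invariant (invariance uses both $V(n) \to +\infty$ and $\sup_n (PV)(n)/V(n) < +\infty$), and that its quotient norm is $\|[f]\|_\cC = \limsup_n |f(n)|/V(n)$. Splitting a representative as $f = f_1 + f_2$ with $f_1$ finitely supported and $|f_2(n)| \leq (\|[f]\|_\cC + \epsilon)V(n)$ everywhere, and using $P$-invariance of $\cB_V^0$ on $f_1$, one computes $\|\bar P^N\|_\cC = \ell_N$ for the induced operator $\bar P$. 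Submultiplicativity of $(\ell_N)$ (inherited from the operator norm on $\cC$) together with Fekete's lemma then give $r(\bar P) = \lim_N \ell_N^{1/N} = L$. The main obstacle is the final link $r_{ess}(P) \geq r(\bar P)$: although one expects every Fredholm perturbation of $P$ to descend to an invertible operator on $\cC$, the non-separability of $\cB_V \simeq \ell^\infty$ allows for pathological rank-one compact operators whose range escapes $\cB_V^0$, so the naive ``compact descends to zero on $\cC$'' argument fails. The robust workaround is to build, for any $\mu$ with $|\mu|$ arbitrarily close to $L$, a Weyl-type sequence $(f_k) \subset \cB_V$ with $\|f_k\|_V = 1$, $\|(P-\mu I)f_k\|_V \to 0$, and no convergent subsequence — thereby placing $\mu$ in the essential spectrum — and this explicit non-compact approximate-eigenvector construction is the technical heart of the proof.
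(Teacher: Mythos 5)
Your treatment of the equivalence $(a)\Leftrightarrow(b)$, the identity $\delta_V(P)=L$, and the upper bound $r_{ess}(P)\le L$ is correct and complete (and essentially the intended route; the paper itself delegates all details to \cite{GuiHerLed11}). The compactness of $\iota:\cB_0\hookrightarrow\cB_V$ and the factorization $P^\ell=\iota\circ P^\ell|_{\cB_0}$ is exactly the observation needed to invoke \cite{HerLed14}. The quotient computation $\|\bar P^N\|_{\cC}=\ell_N$ with $\cC:=\cB_V/\cB_V^0$ is also correct, and it is the right structural intuition for the lower bound. You also correctly diagnose that the naive ``compact operators descend to $0$ on $\cC$'' argument fails, because $\cB_V\simeq\ell^\infty$ admits rank-one compacts whose range escapes $\cB_V^0$.

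The gap is the final step, $r_{ess}(P)\ge L$, and the Weyl-sequence plan you propose to close it is both unexecuted and conceptually troubled. You announce that for $\mu$ with $|\mu|$ close to $L$ one should produce a singular sequence $(f_k)$ with $\|(P-\mu I)f_k\|_V\to 0$. But you do not know in advance that \emph{any} such $\mu$ lies in the essential spectrum — that is precisely the content of the inequality $r_{ess}(P)\ge L$ you are trying to prove. If $r_{ess}(P)<L$ held, no singular Weyl sequence would exist at any $\mu$ with $r_{ess}(P)<|\mu|\le L$ (quasi-compactness makes $P-\mu I$ Fredholm there, so any approximate-eigenvector sequence is forced to accumulate near the finite-dimensional kernel). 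So the plan is circular unless the construction is actually carried out, and you leave it as ``the technical heart of the proof.'' A further obstacle you do not mention: it is not even clear that the circle $|\mu|=L$ meets $\sigma(P)$, since the spectrum of a quotient operator $\bar P$ need not be contained in $\sigma(P)$ — so there is no ready-made candidate $\mu$.

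The step can be closed without Weyl sequences, using the same test functions you already have in hand but pushed through Nussbaum's formula $r_{ess}(P)=\lim_N\chi\big(P^N(B)\big)^{1/N}$, where $\chi$ is the Hausdorff measure of non-compactness and $B$ the unit ball of $\cB_V$. Set $g_m:=V\cdot 1_{\{n\ge m\}}$, so $\|g_m\|_V=1$. Since $(P^Ng_m)(n)=(P^NV)(n)-\sum_{j<m}P^N(n,j)V(j)$ and $\sum_{j<m}P^N(n,j)V(j)\le\max_{j<m}V(j)$, dividing by $V(n)$ and using $V(n)\to\infty$ gives $\limsup_n(P^Ng_m)(n)/V(n)=\ell_N$ for every $m$; in particular $\|P^Ng_m\|_V\ge\ell_N$. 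On the other hand, for each fixed $n$ one has $(P^Ng_m)(n)\to 0$ as $m\to\infty$ (it is a tail of the convergent series $\sum_jP^N(n,j)V(j)$). Now if $P^N(B)$ were covered by finitely many balls $B(h_i,\varepsilon)$, a pigeonhole gives an infinite set $S$ and an index $i^*$ with $\|P^Ng_m-h_{i^*}\|_V\le\varepsilon$ for $m\in S$; letting $m\to\infty$ in $S$ and using pointwise convergence yields $\|h_{i^*}\|_V\le\varepsilon$, whence $\ell_N\le\|P^Ng_m\|_V\le 2\varepsilon$. Thus $\chi(P^N(B))\ge\ell_N/2$, and the factor $1/2$ vanishes in the limit: $r_{ess}(P)\ge\lim_N(\ell_N/2)^{1/N}=L$. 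Your quotient-norm computation is implicitly doing the same bookkeeping, but the measure-of-non-compactness route is what makes the link to $r_{ess}(P)$ rigorous where the Calkin-quotient argument cannot.
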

The proof of the equivalence $(a)\Leftrightarrow (b)$, as well as the equality $\delta_V(P) = L$, is straightforward (see \cite[Cor.~4]{GuiHerLed11}). That $P$ is quasi-compact on $\cB_V$ under (\ref{cond-D}) in the discrete case, with $r_{ess}(P) \leq \delta_V(P)$, can be derived from \cite{Wu04} or \cite{HerLed14} (see Subsection~\ref{sec-qc-V-geo} and use the fact that the injection from $\cB_0$ to $\cB_V$ is compact when $\X:=\N$ and $\lim_n V(n)=+\infty$). Equality $r_{ess}(P) = \delta_V(P)$ can be proved by combining the results \cite{Wu04,HerLed14} (see \cite[Cor.~1]{GuiHerLed11} for details). 

In Sections~\ref{sub-sec-countable} and \ref{rate-general-procedure}, sequences of the special form $ V_\gamma:=\{\gamma^n\}_{n\in\N}$ for some $\gamma\in(1,+\infty)$ will be considered. The associated weighted-supremum  space $\cB_\gamma \equiv \cB_{V_\gamma}$ is defined by: 
\begin{equation*} \label{def-Bgamma-discret}
\cB_{\gamma} := \big\{\{f(n)\}_{n\in\N}\in\C^{\N} : \sup_{n\in\N}\gamma^{-n}|f(n)| < \infty \big\}.
\end{equation*}

\subsection{Quasi-compactness of RWs with bounded state-dependent increments} \label{ex-reflecting-nonhom}  
Let us fix $c,g,d\in\N^*$, and assume that the kernel $P$ satisfies the following conditions: 
\begin{subequations}  
\begin{gather}
\forall i\in\{0,\ldots,g-1\},\quad \sum_{j=0}^c P(i,j)=1; \label{Def_Bord_NHRW} \\
\forall i\ge g, \forall j\in\N, \quad P(i,j) = 
\begin{cases}
 a_{j-i}(i) & \text{ if }\  i-g\leq j \leq i+d \\
 0 & \text{ otherwise}  \\
\end{cases} \label{Def_NHRW}
\end{gather} 
\end{subequations}
where $(a_{-g}(i),\ldots,a_d(i))\in[0,1]^{g+d+1}$ satisfies $\sum_{k=-g}^{d} a_k(i)=1$ for all $i\ge g$. This kind of kernels arises, for instance, from time-discretization of Markovian queuing models. Note that more general models and their use in queuing theory are discussed in \cite{KliDud06}. In particular, conditions for
(non) positive recurrence are provided. 

\begin{pro} \label{pro-non-hom}
Assume that, for every $k\in\Z$ such that $-g\leq k \leq d$, $\lim_n a_k(n)=a_k\in[0,1]$,
and that   
\begin{gather} 
 \exists \gamma\in(1,+\infty): \qquad \phi(\gamma) := \sum_{k=-g}^{d} a_{k}\, \gamma^{k} <1. \label{non-hom-cont}
\end{gather}
Then $P$ satisfies Condition~\emph{(\ref{cond-D})} with $\delta=\phi(\gamma)$. Moreover $P$ is power-bounded and quasi-compact on $\cB_{\gamma}$ with $r_{ess}(P) = L = \phi(\gamma)$.
\end{pro}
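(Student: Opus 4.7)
My approach is to reduce the three conclusions to a single application of Proposition~\ref{cor-qc-bis} with $V:=V_\gamma$: verifying condition (b) of that proposition with $L=\phi(\gamma)$ delivers the weak drift condition (with $\delta_V(P)=\phi(\gamma)$), the power-boundedness and quasi-compactness of $P$ on $\cB_\gamma$, and the equality $r_{ess}(P)=L=\phi(\gamma)$ all at once. Thus the whole task reduces to showing that $\ell_N := \limsup_{n\to\infty}(P^NV_\gamma)(n)/V_\gamma(n)$ equals $\phi(\gamma)^N$ for every $N\ge 1$.

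For the one-step case I would use (\ref{Def_NHRW}) directly: for $n\geq g$,
\[ (PV_\gamma)(n) = \sum_{k=-g}^{d} a_k(n)\, \gamma^{n+k} = V_\gamma(n)\sum_{k=-g}^{d} a_k(n)\, \gamma^k, \]
so the hypothesis $a_k(n)\to a_k$ immediately yields $\ell_1=\phi(\gamma)$. To identify $\ell_N$ in general, I iterate: for $n\geq Ng$, every trajectory of length $N$ starting at $n$ remains in $\{i\geq g\}$, so (\ref{Def_NHRW}) governs each transition and
\[ \frac{(P^N V_\gamma)(n)}{V_\gamma(n)} = \sum_{k_1,\ldots,k_N=-g}^{d} \gamma^{k_1+\cdots+k_N}\, \prod_{j=1}^{N} a_{k_j}\bigl(n + k_1 + \cdots + k_{j-1}\bigr). \]
This is a finite sum of $(g+d+1)^N$ terms; in each term, every shifted argument tends to $+\infty$ as $n\to\infty$, so by hypothesis each factor $a_{k_j}(\cdot)$ converges to $a_{k_j}$. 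Distributing the limit through the finite sum yields $\prod_{j=1}^{N}\bigl(\sum_{k_j=-g}^{d} a_{k_j}\gamma^{k_j}\bigr) = \phi(\gamma)^N$. Hence $\ell_N = \phi(\gamma)^N$, so $\ell_N^{1/N} = \phi(\gamma) < 1$ for every $N$, and $L=\phi(\gamma)$.

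With condition (b) of Proposition~\ref{cor-qc-bis} thus verified, the proposition produces exactly the three announced claims. I expect no serious obstacle: the only mildly delicate point is checking that no boundary transitions governed by (\ref{Def_Bord_NHRW}) enter the iterated expansion, which is precisely what the threshold $n\geq Ng$ guarantees; after that, the homogeneous-like factorization leading to $\phi(\gamma)^N$ is transparent and the limit-sum interchange is justified by the finiteness of the summation.
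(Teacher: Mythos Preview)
Your proposal is correct and follows essentially the same route as the paper: both reduce everything to Proposition~\ref{cor-qc-bis} by showing $\ell_N=\phi(\gamma)^N$ for all $N\ge1$, with the only cosmetic difference that the paper establishes this by induction on $N$ (writing $P^NV=P(P^{N-1}V)$ and using $\ell_{N-1}=\phi(\gamma)^{N-1}$) whereas you unfold the $N$-step sum directly. Your threshold $n\ge Ng$ and the finite-sum limit interchange are exactly the ingredients the paper uses.
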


\begin{lem} \label{NERI} When $a_{-g}$ and $a_d$ are positive, Condition~\emph{(\ref{non-hom-cont})} is equivalent to 
\begin{equation} \label{MoySautBorne}
\sum_{k=-g}^{d} k\, a_{k}\, < 0. \tag{\textbf{NERI}}
\end{equation}
Then, there exists a unique real number $\gamma_0 >1$ such that $\phi(\gamma_0)=1$ and  
\[\forall \gamma \in(1,\gamma_0), \quad \phi(\gamma)<1 \]
and there is a unique $\widehat{\gamma}$ such that 
\begin{equation*} \label{def-delta}
 \dc:=\phi(\widehat\gamma) = \min_{\gamma\in(1,\infty)} \phi(\gamma) = \min_{\gamma\in(1,\gamma_0)} \phi(\gamma) <1.	
\end{equation*}
\end{lem}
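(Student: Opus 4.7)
The plan is to exploit the fact that $\phi$ is a Laurent polynomial in $\gamma$ with nonnegative coefficients and with genuine negative and positive exponents (thanks to $a_{-g},a_d>0$), so that on $(0,+\infty)$ it is smooth, \emph{strictly convex}, and tends to $+\infty$ at both endpoints. The basic observation is that
\[
\phi''(\gamma)=\sum_{k=-g}^{d}k(k-1)\,a_k\,\gamma^{k-2}\geq g(g+1)\,a_{-g}\,\gamma^{-g-2}>0\qquad(\gamma>0),
\]
since $k(k-1)\geq 0$ for every integer $k$ and $a_{-g}>0$. Combined with $\phi(\gamma)\to+\infty$ as $\gamma\to 0^+$ (from the term $a_{-g}\gamma^{-g}$) and as $\gamma\to+\infty$ (from $a_d\gamma^d$), this yields that $\phi$ attains a unique minimum at some $\widehat{\gamma}\in(0,+\infty)$, with $\phi$ strictly decreasing on $(0,\widehat{\gamma})$ and strictly increasing on $(\widehat{\gamma},+\infty)$. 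Note also that $\phi(1)=\sum_k a_k=1$ and $\phi'(1)=\sum_k k\,a_k$.

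For the equivalence $(\ref{non-hom-cont})\Leftrightarrow(\ref{MoySautBorne})$ I would argue as follows. If (\ref{non-hom-cont}) holds, pick $\gamma>1$ with $\phi(\gamma)<1=\phi(1)$. By convexity, $\phi(\gamma)\geq\phi(1)+\phi'(1)(\gamma-1)$, so $\phi'(1)(\gamma-1)<0$, whence $\phi'(1)<0$, which is exactly (\ref{MoySautBorne}). Conversely, if $\phi'(1)<0$, the first-order Taylor expansion $\phi(\gamma)=1+\phi'(1)(\gamma-1)+o(\gamma-1)$ gives $\phi(\gamma)<1$ for every $\gamma>1$ sufficiently close to~$1$, which is (\ref{non-hom-cont}).

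Under (\ref{MoySautBorne}), $\phi'(1)<0$ together with the strict convexity forces $\widehat{\gamma}>1$; uniqueness of $\widehat{\gamma}$ is immediate from strict convexity. Then $\phi$ is strictly decreasing on $[1,\widehat{\gamma}]$ with $\phi(1)=1$ and $\phi(\widehat{\gamma})<1$, and strictly increasing on $[\widehat{\gamma},+\infty)$ with $\phi(\gamma)\to+\infty$. The intermediate value theorem, applied on $[\widehat{\gamma},+\infty)$, yields a unique $\gamma_0>\widehat{\gamma}>1$ with $\phi(\gamma_0)=1$, and by monotonicity $\phi(\gamma)<1$ for every $\gamma\in(1,\gamma_0)$. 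Since $\phi$ is increasing on $[\widehat{\gamma},+\infty)$, the minimum over $(1,+\infty)$ coincides with the minimum over $(1,\gamma_0)$ and is attained at $\widehat{\gamma}$, giving $\widehat{\delta}=\phi(\widehat{\gamma})<1$.

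The argument is essentially elementary calculus on a strictly convex Laurent polynomial; there is no real obstacle, only the mild care needed to get strict convexity from the observation $k(k-1)\geq 0$ plus the positivity of at least one coefficient with $|k|\geq 1$, which is secured by the hypothesis $a_{-g}>0$.
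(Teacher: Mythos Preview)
Your proof is correct and follows essentially the same approach as the paper's: both rely on the strict convexity of $\phi$ on $(0,+\infty)$ (via $\phi''>0$), the limits $\phi(t)\to+\infty$ at $0^+$ and $+\infty$ from $a_{-g},a_d>0$, and the identification $\phi(1)=1$, $\phi'(1)=\sum_k k\,a_k$ to reduce (\ref{non-hom-cont}) to $\phi'(1)<0$. You simply spell out in more detail what the paper dismisses as ``immediate.''
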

Condition~(\ref{MoySautBorne}) means that the expectation of the probability distribution of the random increment is negative. 
Although the results of the paper on RWs with i.d.~bounded increments involving Condition~(\ref{MoySautBorne}) and $a_{-g},a_d>0$ will be valid for $\gamma \in(1,\gamma_0)$, only this value $\widehat\gamma$ is considered in the statements. Note that the essential spectral radius $r_{ess}(P_{|\cB_{\gc}})$ of $P$ with respect to $\cB_{\gc}$, which will be denoted by $\rc(P)$ in the sequel, is the smallest value of $r_{ess}(P_{|\cB_{\gamma}})$ on $\cB_{\gamma}$  for $\gamma\in(1,\gamma_0)$. When $\gamma \nearrow \gamma_0$, the essential spectral radius $r_{ess}(P_{|\cB_{\gamma}}) \nearrow 1$ since the space $\cB_\gamma$ becomes large.  
When $\gamma \searrow 1$, then $r_{ess}(P_{|\cB_{\gamma}}) \nearrow 1$ since $\cB_\gamma$ becomes close to the space $\cB_0$ of bounded functions. In this case, the geometric ergodicity is lost since the RWs are typically not uniformly ergodic (i.e. $V\equiv 1$) due the non quasi-compactness of $P$ on  $\cB_0$.  
\begin{ex}[State-dependent birth-and-death Markov chains] \label{ex-bin-non-hom} 
When $c=g=d:=1$ in \emph{(\ref{Def_Bord_NHRW})}-\emph{(\ref{Def_NHRW})}, we obtain the standard class of state-dependent birth-and-death Markov chains:
\begin{gather*}
P(0,0) := r_0,\quad P(0,1) := q_0\\
\forall n\ge 1,\ P(n,n-1) := p_n,\quad P(n,n) := r_n,\quad P(n,n+1) := q_n,
\end{gather*}
where 
$(p_0,q_0)\in[0,1]^2, p_0+q_0=1$ and $(p_n,r_n,q_n)\in[0,1]^3, p_n+r_n+q_n=1$. Assume that:  
$$\lim_n p_n := p  \quad \lim_n r_n := r ,\quad \lim_n q_n :=q.$$
If $\gamma\in(1,+\infty)$ is such that $\phi(\gamma) := p/\gamma + r + q\gamma <1$ 
then it follows from Proposition~\ref{pro-non-hom} that 
$r_{ess}(P) = p/\gamma + r + q\gamma$. The conditions $\gamma >1$ and $p/\gamma + r + q\gamma <1$ are equivalent to the following ones (use $r=1-p-q$ for $(i)$): 
\begin{center}
\begin{tabular}{rl}
  $(i)$ & either $q>0$, $q-p<0$ (i.e. \emph{(\ref{MoySautBorne})}) 
  and $1<\gamma < \gamma_0=p/q$;\\
   $(ii)$ & or $q=0$, $p>0$ and $\gamma>1$. 
\end{tabular}
\end{center}
\leftmargini 1.5em
\begin{itemize}
	\item[(i)] When $p>q>0$ and $1<\gamma < \gamma_0$: 
	$P$ is power-bounded and quasi-compact on $\cB_{\gamma}$ with $r_{ess}(P) = \phi(\gamma)$. Set $\widehat\gamma := \sqrt{\gamma_0}=\sqrt{p/q}\in(1,\gamma_0)$. Then 
$\min_{\gamma>1} \phi(\gamma) = \phi(\widehat\gamma) = r+2\sqrt{pq}$ 
	and  the essential spectral radius $\hat{r}_{ess}(P)$ on $\cB_{\widehat\gamma}$ satisfies $\widehat{r}_{ess}(P) = r+2\sqrt{pq}$.
  \item[(ii)] When $q:=0, p>0$ and $\gamma>1$: 
  $r_{ess}(P) = \phi(\gamma)=p/\gamma + r$.
\end{itemize}
\end{ex} 
\begin{rem}
If $c$ is allowed to be $+\infty$ in Condition~\emph{(\ref{Def_Bord_NHRW})}, that is  
\begin{gather} 
  \forall i\in\{0,\ldots,g-1\},\quad \sum_{j\ge 0} P(i,j)\gamma^j < \infty, \label{non-hom-moment} 
\end{gather}
then the conclusions of Proposition~\ref{pro-non-hom} and Example~\ref{ex-bin-non-hom} are still valid under the additional Condition (\ref{non-hom-moment}).
\end{rem}
\begin{proof}{ of Proposition~\ref{pro-non-hom}} 
Set $\phi_n(\gamma) := \sum_{k=-g}^{d} a_{k}(n)\, \gamma^{k}$. We have $(PV_\gamma)(n) = \phi_n(\gamma) V_\gamma(n)$ for each $n\geq g$. Thus $\ell_1 = \lim_n \phi_n(\gamma) =  \phi(\gamma)$. Now assume that $\ell_{N-1} := \lim_n (P^{N-1}V)(n)/V(n) = \phi(\gamma)^{N-1}$ for some $N\geq1$. Since 
$$\forall i\geq Ng,\quad (P^NV)(i) = \sum_{j=-g}^{d} a_j(i)\, (P^{N-1}V)(i+j)$$
we obtain 
$$\frac{(P^NV)(i)}{V(i)} = \sum_{j=-g}^{d} a_j(i) \gamma^j\,  \frac{(P^{N-1}V)(i+j)}{\gamma^{i+j}} \xrightarrow[i\r+\infty]{} \phi(\gamma)\, \phi(\gamma)^{N-1}.$$
Hence $\ell_N = \phi(\gamma)^{N}$, and $\phi(\gamma)=L=r_{ess}(P)$ from Proposition~\ref{cor-qc-bis}. 
\end{proof}
\noindent\begin{proof}{ of Lemma~\ref{NERI}}Since the second derivative of $\phi$ is positive on $(0,+\infty)$, $\phi$ is convex on $(0,+\infty)$. When $a_{-g}$ and $a_d$ are positive then $\lim_{t\r 0^{+}} \phi(t)= \lim_{t\r+\infty} \phi(t)= +\infty$ and, 
 since $\phi(1)=1$,  Condition~(\ref{non-hom-cont}) is  equivalent to $\phi'(1) < 0$, that is (\ref{MoySautBorne}). The other properties of $\phi(\cdot)$ are immediate.
\end{proof}
\subsection{Spectral analysis of RW~with i.d.~bounded increments} \label{sect-rate-general}

Let $P:=(P(i,j))_{(i,j)\in\N^2}$ be the transition kernel of a RW~with i.d.~bounded increments. Specifically we assume that there exist some positive integers $c,g,d\in\N^*$ such that 
\begin{subequations}
\begin{gather}
\forall i\in\{0,\ldots,g-1\},\quad \sum_{j = 0}^{c} P(i,j)=1; \label{cond-bound-prob} \\
\forall i\ge g, \forall j\in\N, \quad P(i,j) = \begin{cases}
 a_{j-i} & \text{if }\  i-g\leq j \leq i+d \\
 0 & \text{otherwise.}  \\
\end{cases} \label{Def_HRW-ter} \\
(a_{-g},\ldots,a_d)\in[0,1]^{g+d+1} : a_{-g}>0, \ a_d>0, \ \sum_{k=-g}^{d} a_k=1. 
\label{Def_HRW-bis} 
\end{gather} 
\end{subequations}
Let us assume that Condition~(\ref{MoySautBorne}) holds. We know from Lemma~\ref{NERI} and Proposition~\ref{pro-non-hom} that $P$ is quasi-compact on $\cB_{\gc}$ with 
$$\rc(P)=\dc := \phi(\gc)<1$$
where $\phi(\cdot)$ is given by (\ref{non-hom-cont}). 

For any $\lambda\in \C$, we denote by $E_{\lambda}(\cdot)$ the following polynomial of degree $N:=d+g$ 
$$
\forall z\in\C,\quad E_{\lambda}(z) := z^g\big(\phi(z) -\lambda\big) = \sum_{k=-g}^{d} a_{k} z^{g+k} - \lambda\, z^g, 
$$
and by $\cE_\lambda$ the set of complex roots of $E_\lambda(\cdot)$. Since $E_{\lambda}(0)=a_{-g} > 0$, we have for any $\lambda\in\C$: 
\[z \in \cE_\lambda \Longleftrightarrow \E_\lambda(z)=0\, \Longleftrightarrow\, \lambda=\phi(z). \] 
 The next proposition investigates the eigenvalues of $P$ on $\cB_{\gc}$ which belong to the annulus 
$$\Lambda := \{\lambda\in\C: \dc < |\lambda| < 1\}.$$
To that effect, for any $\lambda\in\Lambda$, we introduce the following subset  $\cE_\lambda^{-}$  of $\cE_\lambda$ 
$$\cE_\lambda^{-} := \big\{z\in \C:\ E_{\lambda}(z) = 0,\ |z| < \gc\big\}.$$ 
If $\cE_\lambda^{-}=\emptyset$, we set $N(\lambda):=0$. If $\cE_\lambda^{-}\neq\emptyset$, then $N(\lambda)$ is defined as 
$$N(\lambda) := \sum_{z\in {\cal E}_\lambda^{-}}\,  m_z,$$
where $m_z$ denotes the multiplicity of $z$ as root of $E_{\lambda}(\cdot)$. 
Finally, for any $z\in\C$, we set $z^{(1)}:=\{z^n\}_{n\in\N}$, and for any $k\geq2$, $z^{(k)}\in\C^\N$ is defined by: 
$$\forall n\in\N,\quad z^{(k)}(n) := n(n-1)\cdots (n-k+2)\, z^{n-k+1}.$$
\begin{pro} \label{pro-rate-HRW}
Assume that Assumptions~\emph{(\ref{cond-bound-prob})}-\emph{(\ref{Def_HRW-bis})} and \emph{(\ref{MoySautBorne})} hold true. Then 
$$\exists\,  \eta\geq1,\ \forall \lambda\in \Lambda,\quad N(\lambda) = \eta.$$ 
Moreover the two following assertions are equivalent: 
\begin{enumerate}[(i)]
	\item $\lambda\in \Lambda$ is an eigenvalue of $P$ on $\cB_{\gc}$.
	\item There exists a nonzero $\{\alpha_{\lambda,z,k}\}_{z\in {\cal E}_\lambda^{-},1\leq k \leq m_z}\in\C^{\eta}$ such that 
\begin{equation} \label{form-f}
f := \sum_{z\in {\cal E}_\lambda^{-}}\, \sum_{k=1}^{m_z}\alpha_{\lambda,z,k}\, z^{(k)} \in \C^{\N}
\end{equation}
satisfies the boundary equations: $\forall i=0,\ldots,g-1,\ \lambda f(i) = (Pf)(i)$. 
\end{enumerate}
\end{pro}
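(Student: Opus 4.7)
The plan is to split the eigenequation $Pf=\lambda f$ into two parts: on indices $i\geq g$ it reduces to the translation-invariant recurrence $\sum_{k=-g}^d a_k f(i+k)=\lambda f(i)$ of order $N=g+d$ (with characteristic polynomial $E_\lambda$), and on indices $i=0,\ldots,g-1$ it gives the $g$ boundary equations. Since $E_\lambda(0)=a_{-g}>0$, every root of $E_\lambda$ is nonzero, and the classical theory of linear recurrences identifies the $N$-dimensional solution space on $\N$ with the span of $\{z^{(k)}:z\in\cE_\lambda,\,1\leq k\leq m_z\}$. The membership $f\in\cB_{\gc}$ will then select precisely the contributions coming from roots $z\in\cE_\lambda^-$, and the $g$ boundary equations will cut out the eigenfunctions among these.

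For the constancy statement, I would first show that no root of $E_\lambda$ lies on $|z|=\gc$ when $\lambda\in\Lambda$: nonnegativity of the $a_k$ gives $|\phi(z)|\leq\sum_k a_k |z|^k=\phi(\gc)=\dc$ on that circle, so $\phi(z)=\lambda$ with $|\lambda|>\dc$ is impossible. Using the argument principle, $N(\lambda)=\frac{1}{2\pi i}\oint_{|z|=\gc}E_\lambda'(z)/E_\lambda(z)\,dz$ is then a continuous integer-valued function of $\lambda$ on the connected annulus $\Lambda$, hence constant. To see $\eta\geq1$, I pick any real $\lambda\in(\dc,1)$; convexity of $\phi$ together with $\phi(1)=1$ and $\phi(\gc)=\dc$ produces, via the intermediate value theorem, a real root $z\in(1,\gc)$ of $\phi(z)=\lambda$, which lies in $\cE_\lambda^-$.

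For the equivalence, start from an eigenfunction $f\in\cB_{\gc}$. The interior recurrence makes $f$ an element of the $N$-dimensional solution space, hence $f=\sum_{z\in\cE_\lambda}\sum_{k=1}^{m_z}\alpha_{z,k}z^{(k)}$. Linear independence of the functions $n\mapsto n^{a}z^n$ for distinct pairs $(a,z)$ forces $\alpha_{z,k}=0$ whenever $|z|>\gc$ (otherwise the maximum-modulus term would grow like $n^a\rho^n$ with $\rho>\gc$, contradicting $f\in\cB_{\gc}$); since no roots lie on $|z|=\gc$, this leaves exactly the representation~(\ref{form-f}), with the boundary equations surviving as the only remaining constraints. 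Conversely, any $f$ of that form lies in $\cB_{\gc}$ (since $n^{k-1}|z|^n=o(\gc^n)$ for $|z|<\gc$), satisfies the interior recurrence because each $z^{(k)}$ does, and reduces $Pf=\lambda f$ to the boundary equations; it is nonzero by linear independence of the $z^{(k)}$ provided the coefficient tuple is nonzero.

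I expect the main technical subtlety to be ruling out cancellations among the large-modulus exponentials: showing that a sum $\sum_{z\in\cE_\lambda,\,|z|>\gc}p_z(n)z^n$ that stays $O(\gc^n)$ must have every polynomial $p_z$ vanish. This is handled by isolating the roots of maximum modulus $\rho$ among those with nonzero coefficient and invoking linear independence of the distinct exponentials on that circle, so the leading $n^{a}\rho^n$-contribution cannot be cancelled. Everything else is standard bookkeeping with constant-coefficient linear recurrences, the nondegeneracy $E_\lambda(0)\neq0$ preventing any pathology at~$z=0$.
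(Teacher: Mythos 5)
Your proposal is correct and follows the paper's overall structure for the equivalence $(i)\Leftrightarrow(ii)$: reduce the eigenequation on $i\geq g$ to a constant-coefficient linear recurrence of order $N=g+d$ with characteristic polynomial $E_\lambda$, expand $f$ in the generalized exponentials $z^{(k)}$, kill the contributions from roots with $|z|>\gc$ by a maximum-modulus argument (valid because no roots lie on $|z|=\gc$, which you correctly prove from $|\phi(z)|\leq\phi(|z|)=\dc$), and observe that what remains is exactly form~(\ref{form-f}) subject to the $g$ boundary equations. This matches the paper.

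The one genuinely different ingredient is your proof that $N(\cdot)$ is constant on $\Lambda$. The paper proves this (their Lemma~\ref{fct-N-cste}) by a bare-hands compactness argument: assume discontinuity at $\lambda$, extract a convergent subsequence of the $N$ roots $z_i(\lambda_n)$, use the factorization $E_\lambda(z)=a_d\prod_i(z-z_i)$ to identify the limits with $\cE_\lambda$, and derive a contradiction to the definition of $N(\lambda)$ via the no-root-on-the-circle remark. Your proof instead expresses $N(\lambda)$ as the winding integral $\frac{1}{2\pi i}\oint_{|z|=\gc}E_\lambda'(z)/E_\lambda(z)\,dz$ over the fixed contour $|z|=\gc$; since $E_\lambda$ has no zero on that contour for $\lambda\in\Lambda$ and depends continuously (indeed affinely) on $\lambda$, the integral is a continuous integer-valued function on the connected set $\Lambda$, hence constant. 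This is cleaner and avoids the subsequence extraction, at the price of invoking the argument principle. Both are standard and both are correct; the compactness route is marginally more elementary, while yours is shorter and makes the local constancy transparent. Your deduction of $\eta\geq1$ via the intermediate value theorem on $(1,\gc)$ is exactly the paper's Remark~\ref{rem-equi-delta}.
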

The first step of the elimination procedure of Section~\ref{rate-general-procedure} is to plug $f$ of the form (\ref{form-f}) in the boundary equations. This gives a linear system in $\alpha_{\lambda,z,k}$. Since $\Lambda$ is infinite, that $N(\lambda)$ does not depend on $\lambda$ is  
crucial to initialize this procedure. To specify the value of $\eta$, it is sufficient to compute $N(\lambda)$ for some (any)  $\lambda\in\Lambda$. 
\begin{rem} \label{rem-equi-delta}
Under Condition~\emph{(\ref{MoySautBorne})}, $\phi(\cdot)$ is strictly decreasing from $(1,\gc)$ to $(\dc,1)$, so that we have: $\forall \lambda\in(\dc,1),\ \phi^{-1}(\lambda)\in(1,\gc)$. Since $\phi^{-1}(\lambda)\in\cE_\lambda$, we obtain 
\begin{equation} \label{N-geq-1}
\forall \lambda\in(\dc,1),\quad N(\lambda)\geq 1.
\end{equation}
\end{rem}
\begin{rem} \label{rem-mod-z-strict}
Let Condition~\emph{(\ref{MoySautBorne})} be satisfied. Set $\cE_\lambda^{+} := \{z\in \C:\ E_{\lambda}(z) = 0,\ |z| > \gc\}$. Then 
\begin{equation*} \label{mod-z-strict}
\forall \lambda\in\Lambda,\quad \cE_\lambda = \cE_\lambda^{-} \sqcup   \cE_\lambda^{+}.
\end{equation*}
In other words, for any $\lambda\in\Lambda$, $E_\lambda(\cdot)$ has no root of modulus $\gc$. Indeed, consider $\lambda\in \Lambda$, $z\in \cE_\lambda$, and assume that $|z| = \gc$. Since $\lambda=\phi(z)$, we obtain the inequality $|\lambda| \leq \phi(|z|)= \phi(\gc)$ which is impossible since $\phi(\gc)=\dc$ and  $\lambda\in \Lambda$.   
\end{rem}
\begin{rem} \label{rem-dim}
Assertion $(ii)$ of Proposition~\ref{pro-rate-HRW} does not mean that the dimension of the eigenspace $\ker(P-\lambda I)$ associated with $\lambda$ is $\eta$. We shall see in Subsection~\ref{further-ex}  that we can have $\eta=2$ when $g=2$, $d=1$ and $c=2$ in \emph{(\ref{cond-bound-prob})}-\emph{(\ref{Def_HRW-bis})}, while $\dim\ker(P-\lambda I)\leq 1$ since $Pf=\lambda f$ and $f(0)=0$ clearly imply $f=0$ (by induction).  
\end{rem}

The following surprising lemma, based on Remark~\ref{rem-mod-z-strict}, is used to derive Proposition~\ref{pro-rate-HRW}.
\begin{lem} \label{fct-N-cste}
Under Condition~\emph{(\ref{MoySautBorne})}, the function $N(\cdot)$ is constant on $\Lambda$. 
\end{lem}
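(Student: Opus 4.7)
The plan is to prove the lemma by a standard continuity-of-roots argument, combined with the key information provided by Remark~\ref{rem-mod-z-strict}. The set $\Lambda=\{\lambda\in\C:\dc<|\lambda|<1\}$ is an open annulus, hence connected. For every $\lambda\in\C$, $E_\lambda(z)=z^g(\phi(z)-\lambda)$ is a polynomial of degree $d+g$ in $z$ whose coefficients depend affinely (in particular holomorphically) on $\lambda$. Moreover, by Remark~\ref{rem-mod-z-strict}, for every $\lambda\in\Lambda$ the polynomial $E_\lambda$ has no root on the circle $\{|z|=\gc\}$. Hence the quantity $N(\lambda)$ equals the number of zeros of $E_\lambda$, counted with multiplicity, inside the closed disk $\{|z|\le\gc\}$.

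First, I would invoke the argument principle on this disk. Since $E_\lambda$ has no zero on the bounding circle $\{|z|=\gc\}$ for $\lambda\in\Lambda$, we have
\[
N(\lambda) \;=\; \frac{1}{2\pi i}\oint_{|z|=\gc}\frac{E'_\lambda(z)}{E_\lambda(z)}\,dz.
\]
The map $(\lambda,z)\mapsto E'_\lambda(z)/E_\lambda(z)$ is jointly continuous on a neighborhood of $\Lambda\times\{|z|=\gc\}$ (the denominator is uniformly bounded away from $0$ on compact subsets of $\Lambda$ times the compact circle $\{|z|=\gc\}$, by the polynomial dependence on $\lambda$ and compactness). Consequently $\lambda\mapsto N(\lambda)$ is continuous on $\Lambda$. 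Being integer-valued and continuous on the connected set $\Lambda$, it is constant: there exists $\eta\in\N$ such that $N(\lambda)=\eta$ for all $\lambda\in\Lambda$.

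Finally, to see $\eta\ge1$, it suffices to evaluate $N$ at a single point of $\Lambda$: by Remark~\ref{rem-equi-delta}, for any real $\lambda\in(\dc,1)\subset\Lambda$ one has $N(\lambda)\ge1$, so the common value $\eta$ must satisfy $\eta\ge1$.

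The only non-routine ingredient is the absence of boundary roots given by Remark~\ref{rem-mod-z-strict}: without it, roots could drift across $|z|=\gc$ as $\lambda$ moves in $\Lambda$ and $N(\lambda)$ would jump. Once that is in hand, constancy is a direct consequence of the argument principle together with the connectedness of $\Lambda$, and no genuine obstacle remains.
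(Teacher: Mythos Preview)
Your proof is correct and reaches the same conclusion via the same structural scheme: integer-valued plus continuous on the connected annulus $\Lambda$ forces constancy, with Remark~\ref{rem-mod-z-strict} supplying the crucial absence of roots on $\{|z|=\gc\}$. The genuine difference lies in how continuity of $N(\cdot)$ is established. The paper argues by contradiction with an elementary sequential continuity-of-roots argument: assuming $N(\lambda_n)\neq N(\lambda)$ along some $\lambda_n\to\lambda$, it lists the roots of $E_{\lambda_n}$ by increasing modulus, passes to a convergent subsequence, and uses Remark~\ref{rem-mod-z-strict} (applied to the limit $\lambda$) to turn the non-strict limit inequalities back into strict ones, contradicting the definition of $N(\lambda)$. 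You instead invoke the argument principle to write $N(\lambda)$ as a contour integral over $\{|z|=\gc\}$ and read off continuity from the joint continuity of $E'_\lambda/E_\lambda$ there. Your route is shorter and cleaner once one is willing to import the argument principle; the paper's route is more self-contained and avoids any complex-analytic machinery beyond basic polynomial facts. Both rely in exactly the same way on Remark~\ref{rem-mod-z-strict}, which is indeed the only substantive ingredient.
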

\begin{proof}{}
Since $\Lambda$ is connected and $N(\cdot)$ is $\N$-valued, it suffices to prove that $N(\cdot)$ is continuous on $\Lambda$. Note that the set $\cup_{\lambda\in\Lambda}\cE_{\lambda}$ is bounded in $\C$ since the coefficients of $E_\lambda(\cdot)$ are obviously uniformly bounded in $\lambda\in\Lambda$.  Now let $\lambda\in \Lambda$ and assume that $N(\cdot)$ is not continuous at $\lambda$. Then there exists a sequence $\{\lambda_n\}_{n\in\N}\in \Lambda^\N$ such that $\lim_n \lambda_n = \lambda$ and  
\begin{enumerate}[(a)]
	\item either: $\ \forall n\geq 0,\ N(\lambda_n) \geq N(\lambda)+1$,
	\item or: $\ \forall n\geq 0,\ N(\lambda_n) \leq N(\lambda)-1$. 
\end{enumerate}
For any $n\geq 0$, let us denote the roots of $E_{\lambda_n}(\cdot)$ by $z_1(\lambda_n),\ldots,z_N(\lambda_n)$, and suppose for convenience that they are listed by increasing modulus, and by increasing argument when they have the same modulus. Applying Remark~\ref{rem-mod-z-strict} to $\lambda_n$, we obtain:  
\begin{equation*} \label{N-lambda-bis}
\forall i\in \{1,\ldots,N(\lambda_n)\},\ |z_i(\lambda_n)| < \gc \quad \text{and} \quad \forall i\in \{N(\lambda_n)+1,\ldots, N\},\ |z_i(\lambda_n)| > \gc. 
\end{equation*}
Up to consider a subsequence, we may suppose that, for every $1\leq i \leq N$, the sequence $\{z_i(\lambda_n)\}_{n\in\N}$ converges to some $z_i\in\C$. Note that 
$$\cE_\lambda = \{z_1,z_2,\ldots,z_N\}$$
where $z_i$ is repeated in this list with respect to its multiplicity $m_{z_i}$, since   
$$\forall z\in\C,\quad E_{\lambda}(z) = \lim_n E_{\lambda_n}(z) = \lim_n  a_d\prod_{i=1}^N(z-z_i(\lambda_n)) = a_d\prod_{i=1}^N(z-z_i).$$
In case $(a)$, we have  
$$\forall n\geq0,\qquad |z_1(\lambda_n)| < \gc,\ \ldots,\ |z_{N(\lambda)+1}(\lambda_n)| < \gc.$$ 
When $n\r+\infty$, this gives using Remark~\ref{rem-mod-z-strict}: 
$$|z_1| < \gc\ ,\ldots,|z_{N(\lambda)+1}| < \gc.$$
Thus at least $N(\lambda)+1$ roots of $E_\lambda(\cdot)$ (counted with their multiplicity) are of modulus strictly less than $\gc$: this contradicts the definition of  $N(\lambda)$. \newline 
In case $(b)$, we have
$$\forall n\geq0, \qquad |z_{N(\lambda)}(\lambda_n)| > \gc,\  |z_{N(\lambda)+1}(\lambda_n)| > \gc,\ \ldots,\ |z_{N}(\lambda_n)| > \gc,$$
and this gives similarly when $n\r+\infty$ 
$$|z_{N(\lambda)}| > \gc,\  |z_{N(\lambda)+1}| > \gc,\ldots,|z_{N}| > \gc.$$
Thus at least $N-N(\lambda)+1$ roots of $E_\lambda(\cdot)$ (counted with their multiplicity) are of modulus strictly larger than $\gc$. This contradicts the definition of  $N(\lambda)$. 
\end{proof}
\begin{proof}{ of Proposition~\ref{pro-rate-HRW}}
From Lemma~\ref{fct-N-cste} and (\ref{N-geq-1}), we obtain: $\forall \lambda\in \Lambda,\ N(\lambda) = \eta $ for some $\eta\geq 1$. Now we prove the implication $(i)\Rightarrow (ii)$. Let $\lambda\in \Lambda$ be any eigenvalue of $P$ on $\cB_{\gc}$ and let $f:=\{f(n)\}_{n\in\N}$ be a nonzero sequence in $\cB_{\gc}$ satisfying $Pf = \lambda f$. In particular $f$ satisfies the following equalities     
\begin{equation} \label{rel-rec}
\forall i\geq g,\quad \lambda\, f(i) = \sum_{j=i-g}^{i+g} a_{j-i}f(j).
\end{equation}
Since the characteristic polynomial associated with these recursive formulas is $E_\lambda(\cdot)$, there exists $\{\alpha_{\lambda,z,k}\}_{z\in {\cal E}_\lambda,1\leq k \leq m_z}\in\C^{\eta}$ such that 
$$f = \sum_{z\in{\cal E}_\lambda}\sum_{k=1}^{m_z} \alpha_{\lambda,z,k}\, z^{(k)} \in \C^{\N}$$ 
where $m_z$ denotes the multiplicity of $z\in{\cal E}_\lambda$. Next, since $|f| \leq C\, V_{\gc}$ for some $C>0$ (i.e.~$f\in\cB_{\gc}$), it can be easily seen that $\alpha_{\lambda,z,k}=0$ for every $z\in{\cal E}_\lambda$ such that $|z| > \gc$ and for every $k=1,\ldots,m_z$: : first delete $\alpha_{\lambda,z,m_z}$ for $z$ of maximum modulus and for $m_z$ maximal if there are several $z$ of maximal modulus (to that effect, divide $f$ by $n(n-1)\cdots(n-m_z+2)\, z^{n-m_z+1}$ and use $|f| \leq C V_{\hat\gamma})$. Therefore $f$ is of the form (\ref{form-f}), and it satisfies the boundary equations in (ii) since $Pf = \lambda f$ by hypothesis. 

To prove the implication $(ii)\Rightarrow (i)$, note that any $f:=\{f(n)\}_{n\in\N}$ of the form (\ref{form-f}) belongs to $\cB_{\gc}$ and satisfies (\ref{rel-rec}) since $\cE_\lambda^{-}\subset\cE_\lambda$. If moreover $f$ is non zero and satisfies the boundary equations, then $Pf = \lambda f$. This gives $(i)$. 
\end{proof}

We conclude this study with an additional refinement of Proposition~\ref{pro-rate-HRW}. For any $\lambda\in\Lambda$, let us define the set $\cE_{\lambda,\tau}^{-}$ as follows: 
$$\cE_{\lambda,\tau}^{-} := \big\{z\in \C:\ E_{\lambda}(z) = 0,\ |z| < \gc^\tau\big\} \quad \text{ with } \quad \tau \equiv \tau(\lambda) := \frac{\ln|\lambda|}{\ln\dc}.$$ 
Moreover define the associated function $N'(\cdot)$ by
$$N'(\lambda) := \sum_{z\in {\cal E}_{\lambda,\tau}^{-}}\,  m_z,$$
where $m_z$ is the multiplicity of $z$ as root of $E_{\lambda}(\cdot)$ (with the convention $N'(\lambda)=0$ if $\cE_{\lambda,\tau}^{-}=\emptyset$). 

\begin{lem} \label{rem-ext-tau}
Assume that $P:=(P(i,j))_{(i,j)\in\N^2}$ satisfies Conditions~\emph{(\ref{cond-bound-prob})-(\ref{Def_HRW-bis})}  and  \emph{(\ref{MoySautBorne})}. Moreover assume that 
\begin{equation}
\forall t\in(1,\gc),\quad \phi(t) < t^{\ln\dc/\ln\gc} \label{psi-nega} 
\end{equation}
Then the function $N'(\cdot)$ is constant on $\Lambda$: $\, \exists\,  \eta'\geq1,\ \forall \lambda\in \Lambda,\ N'(\lambda) = \eta'$. 
\end{lem}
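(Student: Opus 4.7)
The plan is to mimic the argument used in Lemma~\ref{fct-N-cste}: since $\Lambda$ is connected and $N'(\cdot)$ is $\N$-valued, it suffices to show that $N'(\cdot)$ is continuous on $\Lambda$. The main adaptation is that the threshold radius $\gc^{\tau(\lambda)}$ now depends on $\lambda$, whereas in Lemma~\ref{fct-N-cste} the threshold was the fixed radius $\gc$.

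The key preliminary step, analogous to Remark~\ref{rem-mod-z-strict}, is to show that assumption~(\ref{psi-nega}) forbids $E_\lambda(\cdot)$ from having any root on the circle $|z|=\gc^{\tau(\lambda)}$, for every $\lambda\in\Lambda$. Indeed, for $\lambda\in\Lambda$ we have $\dc<|\lambda|<1$, hence $\tau(\lambda)=\ln|\lambda|/\ln\dc\in(0,1)$ and $\gc^{\tau(\lambda)}\in(1,\gc)$. If there were $z\in\cE_\lambda$ with $|z|=\gc^{\tau(\lambda)}$, then $\lambda=\phi(z)$ would give $|\lambda|\le\phi(|z|)=\phi(\gc^{\tau(\lambda)})$. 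But~(\ref{psi-nega}) yields
$$\phi(\gc^{\tau(\lambda)})<\bigl(\gc^{\tau(\lambda)}\bigr)^{\ln\dc/\ln\gc}=\gc^{\tau(\lambda)\,\ln\dc/\ln\gc}=e^{\ln|\lambda|}=|\lambda|,$$
since $\tau(\lambda)\ln\dc=\ln|\lambda|$. This contradiction rules out roots on the threshold circle.

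With this in hand, continuity of $N'(\cdot)$ on $\Lambda$ follows exactly as in Lemma~\ref{fct-N-cste}. Fix $\lambda\in\Lambda$ and suppose, for contradiction, that there exists $\{\lambda_n\}_{n\in\N}\in\Lambda^\N$ with $\lambda_n\to\lambda$ and either $N'(\lambda_n)\ge N'(\lambda)+1$ or $N'(\lambda_n)\le N'(\lambda)-1$ for all $n$. Label the roots $z_1(\lambda_n),\ldots,z_N(\lambda_n)$ of $E_{\lambda_n}$ by increasing modulus (breaking ties by argument) and, after extracting a subsequence, assume $z_i(\lambda_n)\to z_i$ for every $i$. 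Because the coefficients of $E_{\lambda_n}$ converge to those of $E_\lambda$, the limits $z_1,\ldots,z_N$ list the roots of $E_\lambda$ with multiplicities. The map $\lambda\mapsto\gc^{\tau(\lambda)}$ is also continuous. Passing to the limit in the inequalities $|z_i(\lambda_n)|<\gc^{\tau(\lambda_n)}$ or $|z_i(\lambda_n)|>\gc^{\tau(\lambda_n)}$ then produces, respectively, at least $N'(\lambda)+1$ roots of $E_\lambda$ of modulus $\le\gc^{\tau(\lambda)}$, or at least $N-N'(\lambda)+1$ roots of modulus $\ge\gc^{\tau(\lambda)}$. The preliminary step excludes equality, so both cases contradict the definition of $N'(\lambda)$. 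Hence $N'(\cdot)$ is continuous, and consequently constant on $\Lambda$.

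The main obstacle is handling the moving threshold: both the roots and the radius $\gc^{\tau(\lambda)}$ vary with $\lambda$, so the ``no root on the threshold'' property can only hold thanks to the strict inequality in~(\ref{psi-nega}). Once this is established, the connectedness argument transfers verbatim from Lemma~\ref{fct-N-cste}.
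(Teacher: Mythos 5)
Your proof is correct and follows essentially the same route as the paper: establish (via the strict inequality in~(\ref{psi-nega}), using $\lambda=\phi(z)$ and the identity $\gc^{\tau(\lambda)\ln\dc/\ln\gc}=|\lambda|$) that no root of $E_\lambda$ lies on the circle $|z|=\gc^{\tau(\lambda)}$, then transfer the connectedness-plus-continuity argument from Lemma~\ref{fct-N-cste} using the continuity of $\lambda\mapsto\gc^{\tau(\lambda)}$. The only cosmetic difference is that you carry out the exponent computation directly, while the paper invokes the equivalence recorded in Remark~\ref{rem-equi-psi-nega-bis}.
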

From Lemma~\ref{rem-ext-tau}, all the assertions of Proposition~\ref{pro-rate-HRW} are still valid when $\eta$ and $\cE_\lambda^{-}$ are replaced with $\eta'$ and $\cE_{\lambda,\tau}^{-}$ respectively. That $\cE_\lambda^{-}$ may be replaced with $\cE_{\lambda,\tau}^{-}$ in (\ref{form-f}) follows from Proposition~\ref{pro-tail-fct-propre}. Consequently, under the additional condition $\eta' \leq g$, the elimination procedure of Section~\ref{rate-general-procedure} may be adapted by using Lemma~\ref{rem-ext-tau}. Since $\eta'\leq\eta$, the resulting  procedure is computationally interesting when $g$ or $d$ are large.   
\begin{rem} \label{rem-equi-psi-nega-bis}
Condition~\emph{(\ref{psi-nega})} is the additional assumption in Lemma~\ref{rem-ext-tau} with respect to Lemma~\ref{fct-N-cste}. Since $\phi$ is strictly decreasing on $(1,\gc)$ under Condition~\emph{(\ref{MoySautBorne})}, Condition~\emph{(\ref{psi-nega})} is equivalent to the following one 
\begin{equation} \label{equi-psi-nega}
\forall z\in(1,\gc),\quad z < \gc^{\ln\phi(z)/\ln\dc}.
\end{equation}
Indeed, for every $t\in(1,\gc)$, we have $u:=t^{\ln\dc/\ln\gc}\in(\dc,1)$ and $z:=\phi^{-1}(u)\in(1,\gc)$. Hence 
\begin{equation} \label{equi-psi-nega-bis}
(\ref{psi-nega})\ \Longleftrightarrow\ \forall u\in(\dc,1),\ \phi\big(\gc^{\ln u/\ln\dc}\big) < u\ \Longleftrightarrow\ (\ref{equi-psi-nega}). 
\end{equation}
Therefore, under Condition~\emph{(\ref{psi-nega})}, for any $\lambda\in(\dc,1)$ we have $\cE_{\lambda,\tau}^{-}\neq\emptyset$ since $z=\phi^{-1}(\lambda)$ satisfies $z < \gc^{\, \tau(\lambda)}$ from \emph{(\ref{equi-psi-nega})}. 
\end{rem}
\begin{proof}{ of Lemma~\ref{rem-ext-tau}}
The proof is similar to that of Lemma~\ref{fct-N-cste}. Under Condition~(\ref{psi-nega}), Remark~\ref{rem-mod-z-strict} extends as follows: 
\begin{equation} \label{mod-z-strict-tau}
\cE_\lambda = \cE_{\lambda,\tau}^{-} \sqcup   \big(\cE_\lambda\cap\big\{z\in \C:\, |z| > \gc^\tau\big\}\big).
\end{equation}
Indeed, consider $\lambda\in \Lambda$ and $z\in \cE_\lambda$ such that $|z| = \gc^{\tau}$. Since $\lambda=\phi(z)$, we have $|\lambda| \leq \phi(|z|)$, thus $|\lambda| \leq \phi(\gc^{\tau})$. 
This inequality contradicts Condition~(\ref{psi-nega}) (use the definition of $\tau$ and the second equivalence in (\ref{equi-psi-nega-bis}) with $u:=|\lambda|$). 
Next, using (\ref{mod-z-strict-tau}) and the continuity of $\tau(\cdot)$, Lemma~\ref{fct-N-cste} easily extends to the function $N'(\cdot)$. 
\end{proof}

\section{Convergence rate for RWs with i.d.~bounded increments}\label{rate-general-procedure}

Let us recall that any RW~with i.d.~bounded increments defined by (\ref{cond-bound-prob})-(\ref{Def_HRW-bis}) and satisfying (\ref{MoySautBorne}), has an invariant probability measure $\pi$ on $\N$ such $\pi(V_{\gc})<\infty$ where $V_{\gc}:=\{\gc^n\}_{n\in\N}$ and $\gc$ is defined in Lemma~\ref{NERI}.   Indeed $\dc:=\phi(\gc)<1$ so that Condition~(\ref{cond-D}) holds with $V_{\gc}$ from Proposition~\ref{pro-non-hom}. The expected conclusions on $\pi$ can be deduced from the first statement of \cite[Cor~5]{GuiHerLed11}. Note that, from Lemma~\ref{NERI}, the previous fact is valid for any $\gamma\in(1,\gamma_0)$ in place of $\gc$.

The $V_{\gc}$-geometric ergodicity of the RW may be studied using Proposition~\ref{CNS-qc-Vgeo}. Next we can derive from Proposition~\ref{pro-rate-HRW} an effective procedure to compute the rate of convergence with respect to $\cB_{\gc}$ (see (\ref{Def_rhoV})), that is denoted by $\rhoc(P)$. The most favorable case for initializing the procedure (see (\ref{eli-linear-prim}) and  (\ref{eli-linear-sec})) is to assume that  for some (any) $\lambda\in\Lambda$
\begin{equation} 
\eta := N(\lambda) \leq g. \label{cond-card-d}
\end{equation}
\leftmargini 1.5em
\begin{itemize}
	\item {\it First step: checking Condition~\emph{(\ref{cond-card-d})}.}
 From Lemma~\ref{fct-N-cste}, computing $\eta$ and testing $\eta \leq g$ of Assumption~(\ref{cond-card-d}) can be done by analyzing the roots of $E_{\lambda}(\cdot)$ for some (any) $\lambda\in\Lambda$.
	\item {\it Second step: linear and polynomial eliminations.} This second step consists in applying some linear and (successive) polynomial eliminations in order to find a finite set ${\cal Z}\subset\Lambda$ containing all the eigenvalues of $P$ on $\cB_{\gc}$ in $\Lambda$. Conversely, the elements of ${\cal Z}$ providing eigenvalues of $P$ on $\cB_{\gc}$ can be identified using Condition~$(ii)$ of Proposition~\ref{pro-rate-HRW}. Note that the explicit computation of the roots of $E_\lambda(\cdot)$ is only required for the elements $\lambda$ of the finite set ${\cal Z}$. This is detailed in Corollary~\ref{cor-step-3}.  
\end{itemize}

Under the assumptions of Proposition~\ref{pro-rate-HRW}, we define the set 
$$\cM := \big\{ (m_1,\ldots,m_s)\in\{1,\ldots,s\}^s : s\in\{1,\ldots,\eta\}, m_1\leq\ldots\leq m_s \text{ and }\sum_{i=1}^sm_i=\eta\big\}.$$
 Note that  $\cM$ is a finite set and that, for every $\lambda\in \Lambda$, there exists a unique $\mu\in\cM$ such that the set ${\cal E}_\lambda^{-}$ is composed of $s$ distinct roots of $E_\lambda(\cdot)$ with multiplicity $m_1,\ldots,m_s$ respectively. 
\begin{cor} \label{cor-step-3}
Assume that Assumptions~\emph{(\ref{cond-bound-prob})}-\emph{(\ref{Def_HRW-bis})} and \emph{(\ref{MoySautBorne})} hold true. Set  $\ell:= {g\choose\eta}$. Then there exist a family of polynomials functions $\{\cR_{\mu,k},\mu\in\cM,\, 1\leq k\leq\ell\}$, with coefficients only depending on $\mu$ and on the transition probabilities $P(i,j)$,  such that the following assertions hold true for any $\mu\in\cM$. 
\begin{enumerate}[(i)]
	\item Let $\lambda\in \Lambda$ be an eigenvalue of $P$ on $\cB_{\gc}$ such that, for some $s\in\{1,\ldots,\eta\}$, the set ${\cal E}_\lambda^{-}$ is composed of $s$ roots of $E_\lambda(\cdot)$ with multiplicity $m_1,\ldots,m_s$ respectively. Then   
\begin{equation} \label{cond-eli}
\cR_{\mu,1}(\lambda)=0,\ldots, \cR_{\mu,\ell}(\lambda)=0.
\end{equation}
	\item Conversely, let $\lambda\in\Lambda$ satisfying (\ref{cond-eli}) such that, for some $s\in\{1,\ldots,\eta\}$, the set ${\cal E}_\lambda^{-}$ is composed of $s$ roots of $E_\lambda(\cdot)$ with multiplicity $m_1,\ldots,m_s$ respectively. Then a necessary and sufficient condition for  $\lambda$ to be an eigenvalue of $P$ on $\cB_{\gc}$ is that $\lambda$ satisfies Condition~$(ii)$ of Proposition~\ref{pro-rate-HRW}. 
\end{enumerate}
\end{cor}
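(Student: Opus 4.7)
The plan is, for each multiplicity pattern $\mu\in\cM$, to convert the eigenvalue condition of Proposition~\ref{pro-rate-HRW}(ii) into a polynomial system in $\lambda$ alone, via a combination of linear elimination (from the $g$ boundary equations) and polynomial elimination (from the algebraic relations $E_\lambda(z_i)=0$). Fix $\mu=(m_1,\ldots,m_s)\in\cM$ and consider $\lambda\in\Lambda$ for which $\cE_\lambda^{-}=\{z_1,\ldots,z_s\}$ with $z_i$ of multiplicity $m_i$. By Proposition~\ref{pro-rate-HRW}, $\lambda$ is an eigenvalue of $P$ on $\cB_{\gc}$ if and only if there exists a nonzero family $\alpha=(\alpha_{i,k})\in\C^\eta$ such that $f:=\sum_{i,k}\alpha_{i,k}\, z_i^{(k)}$ satisfies $\lambda f(i')=(Pf)(i')$ for $i'=0,\ldots,g-1$. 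Substituting the closed form of $f$ into these boundary equations produces a homogeneous linear system $M(\lambda;z_1,\ldots,z_s)\,\alpha=0$ of size $g\times\eta$, whose entries are polynomial in $\lambda$, the $z_i$'s and the transition probabilities $P(i',j)$.

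Under~(\ref{cond-card-d}), $\eta\le g$, and the existence of a nonzero $\alpha$ is equivalent to the simultaneous vanishing of all $\binom{g}{\eta}=\ell$ maximal $(\eta\times\eta)$ minors of $M$. Labelling these minors $D_{\mu,k}(\lambda;z_1,\ldots,z_s)$ for $k=1,\ldots,\ell$, I would then eliminate the auxiliary variables $z_1,\ldots,z_s$ from each $D_{\mu,k}$ by exploiting the algebraic constraints encoded by $\mu$: each $z_i$ is a root of $E_\lambda$ of multiplicity at least $m_i$, i.e.\ satisfies $E_\lambda^{(j)}(z_i)=0$ for $0\le j< m_i$, which are polynomial equations in $(\lambda,z_i)$ whose coefficients involve only the $a_k$'s. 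Applying iterated resultants of $D_{\mu,k}$ against these polynomials successively in $z_1,z_2,\ldots,z_s$ yields, for each $k$, a polynomial $\cR_{\mu,k}(\lambda)$ whose coefficients depend only on $\mu$ and on the $P(i',j)$'s. By the defining property of the resultant, $\cR_{\mu,k}(\lambda)=0$ whenever the combined polynomial system admits a solution $(z_1,\ldots,z_s)$, and this proves assertion~(i).

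Assertion~(ii) is then a direct restatement of Proposition~\ref{pro-rate-HRW}(ii): among candidate values of $\lambda$ (those in $\Lambda$ satisfying (\ref{cond-eli}) and having the prescribed multiplicity pattern $\mu$), the actual eigenvalues are exactly those for which the boundary system admits a nonzero solution $\alpha$, which is literally the content of Proposition~\ref{pro-rate-HRW}(ii); no additional argument is required.

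The main obstacle I anticipate lies in the clean handling of the multiplicities $m_i>1$: the single relation $E_\lambda(z_i)=0$ does not encode the required multiplicity, so the elimination must jointly use $E_\lambda$ and its derivatives $E_\lambda',E_\lambda'',\ldots,E_\lambda^{(m_i-1)}$, and one must verify that the resulting $\cR_{\mu,k}$ retain polynomial coefficients depending only on $\mu$ and the $P(i',j)$'s, independently of the arbitrary labelling of the $z_i$'s within each multiplicity class. In the simplest case $\mu=(1,\ldots,1)$ with $s=\eta$ (all simple roots), the argument collapses to $\eta$ successive ordinary resultants of each $D_{\mu,k}$ against $E_\lambda$, which serves as a sanity check for the general procedure.
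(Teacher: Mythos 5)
Your overall structure mirrors the paper's: form the $g$ boundary equations as a homogeneous linear system $M(\lambda;z_1,\ldots,z_s)\alpha=0$, equate all $\ell=\binom{g}{\eta}$ maximal minors to zero, and eliminate $z_1,\ldots,z_s$ by iterated resultants. However, there is a genuine gap precisely at the point you flag as the ``main obstacle'': you omit the intermediate \emph{division} step, which is what makes the procedure produce anything nontrivial. Each minor $D_{\mu,k}(\lambda;z_1,\ldots,z_s)$ vanishes identically on the diagonals $z_i=z_j$ (two columns of $M$ coincide there), so $\prod_{i\neq j}(z_i-z_j)^{n_{i,j}}$ with $n_{i,j}:=\min(m_i,m_j)$ divides $D_{\mu,k}$. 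If you feed $D_{\mu,k}$ directly into iterated resultants against $E_\lambda$, the output collapses. Already for $\eta=g=2$ with simple roots: since $D_{\mu,k}(\lambda,z_1,z_2)$ has $(z_1-z_2)$ as a factor, $z_2=z_1$ is a common root of $D_{\mu,k}(\lambda,z_1,\cdot)$ and $E_\lambda(\cdot)$ whenever $E_\lambda(z_1)=0$; hence $E_\lambda(z_1)$ divides $\mathrm{Res}_{z_2}(D_{\mu,k},E_\lambda)$, and the second resultant $\mathrm{Res}_{z_1}(\,\cdot\,,E_\lambda)$ is identically zero. You would then obtain $\cR_{\mu,k}\equiv 0$, which makes assertion~(i) vacuously true but destroys the usefulness of the statement (the set $\Lambda_\mu$ used in Corollary~\ref{cor-rate} would be all of $\Lambda$ rather than finite). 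The paper's fix is exactly to replace each $D_{\mu,k}$ by $D_{\mu,k}\big/\prod_{i\neq j}(z_i-z_j)^{n_{i,j}}$ \emph{before} the resultant eliminations; the quotient no longer vanishes on the diagonals, and the iterated resultants against $E_\lambda$ alone then yield nontrivial $\cR_{\mu,k}$.

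A secondary point: you propose eliminating against the derivatives $E_\lambda^{(j)}$, $0\le j<m_i$, to encode the multiplicities. The paper does \emph{not} do this in the proof of the corollary; all eliminations are against $E_\lambda$ only, and the $\mu$-dependence of $\cR_{\mu,k}$ enters solely through the basis functions $z_i^{(k)}$ appearing in the linear system and through the exponents $n_{i,j}$ in the division factor. Using derivatives is what Remark~\ref{rem-proc} does, but there it produces an \emph{additional} polynomial $Q(\lambda)=\mathrm{Res}_z(E_\lambda,E_\lambda')$ used to shrink $\Lambda_\mu$, not the $\cR_{\mu,k}$ themselves; and it does not by itself cure the spurious vanishing described above, which already occurs in the all-simple-roots pattern $\mu=(1,\ldots,1)$ where no derivative condition is invoked. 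Your reading of assertion~(ii) as an immediate restatement of Proposition~\ref{pro-rate-HRW}(ii) is correct and matches the paper.
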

\begin{proof}{}
Assertion \emph{(ii)} follows from Proposition~\ref{pro-rate-HRW}. To prove \emph{(i)}, first assume for convenience that $\eta=g$ and that $\lambda\in \Lambda$ is an eigenvalue of $P$ on $\cB_{\gc}$ such that the associated set ${\cal E}_\lambda^{-}$ contains $\eta$ distinct roots $z_1,\ldots,z_{\eta}$ of $E_\lambda(\cdot)$ with multiplicity one. We know from Proposition~\ref{pro-rate-HRW} that there exists $f:=\{f(n)\}_{n\in\N}\neq0$ of the form 
\begin{equation*} \label{form-f-simple}
f = \sum_{i=1}^\eta \alpha_{i}\, z_i^{(1)}
\end{equation*}
which satisfies the $g=\eta$ boundary equations: $\forall i=0,\ldots,\eta-1,\ \lambda f(i) = (Pf)(i)$. In other words the linear system provided by these $\eta$ equations has a nonzero solution $(\alpha_{i})_{1\leq i \leq \eta}\in\C^{\eta}$. Therefore the associated determinant is zero: this leads to a polynomial equation of the form 
\begin{equation} \label{eli-linear-prim}
P_{0,1}(\lambda,z_1,\ldots,z_{\eta}) = 0. 
\end{equation}
Since this polynomial is divisible by $\prod_{i\neq j}(z_i-z_j)$, Equation~(\ref{eli-linear-prim}) is equivalent to 
\begin{equation} \label{eli-linear}
P_0(\lambda,z_1,\ldots,z_{\eta})=0\qquad \text{with}\ 
 P_0(\lambda,z_1,\ldots,z_{\eta}) = \frac{P_{0,1}(\lambda,z_1,\ldots,z_{\eta})}{\prod_{i\neq j}(z_i-z_j)}. 
\end{equation}
Note that the coefficients of $P_{0}$ only depend on the $P(i,j)$'s. 

Next, $z_{\eta}$ is a common root of the polynomials $P_{0}(\lambda,z_1,\ldots,z_{{\eta}-1},z)$ and $E_{\lambda}(z)$ with respect to the variable $z$ : this leads to the following necessary condition 
$$P_{1}(\lambda,z_1,\ldots,z_{{\eta}-1}) := \mathrm{Res}_{z_{\eta}}(P_{0},E_{\lambda}) = 0$$
where $\mathrm{Res}_{z_{\eta}}(P_{0},E_{\lambda})$ denotes the resultant of the two polynomials $P_0$ and $E_{\lambda}$ corresponding to the elimination of the variable $z_{\eta}$. 
Again the coefficients of $P_{1}$ only depend on the $P(i,j)$'s. Next, considering the common root $z_{{\eta}-1}$ of the polynomials $P_{1}(\lambda,z_1,\ldots,z_{{\eta}-2},z)$ and $ E_{\lambda}(z)$ leads to the elimination of the variable $z_{{\eta}-1}$
$$P_{2}(\lambda,z_1,\ldots,z_{{\eta}-2}) := \mathrm{Res}_{z_{{\eta}-1}}(P_{1},E_{\lambda}) = 0.$$
Repeating this method, we obtain that a necessary condition for $\lambda$ to be an eigenvalue of $P$ is $\cR(\lambda)=0$ where $\cR$ is some polynomial with coefficients only depending on the $P(i,j)$'s. 

Now let us consider the case when $\eta < g$, $s\in\{1,\ldots,\eta\}$, and $\lambda\in \Lambda$ is assumed to be an eigenvalue of $P$ on $\cB_{\gc}$ such that the associated set ${\cal E}_\lambda^{-}$ contains $s$ distinct roots of $E_\lambda(\cdot)$ with respective multiplicity $m_1,\ldots,m_s$ satisfying $\sum_{i=1}^sm_i=\eta$. Then the elimination (by using determinants) of $(\alpha_{\lambda,z,\ell})\in\C^{\eta}$ provided by the linear system of Proposition~\ref{pro-rate-HRW}, leads to $\ell:= {g\choose\eta}$ polynomial equations 
\begin{equation} \label{eli-linear-sec}
P_{0,\mu,1}(\lambda,z_1,\ldots,z_\eta)=0,\ \ldots,\  P_{0,\mu,\ell}(\lambda,z_1,\ldots,z_\eta)=0.
\end{equation}
As in the case $\eta=g$, these polynomials are replaced in the sequel by the  polynomials obtained by division of the $P_{0,\mu,k}$'s by $\prod_{i\neq j} (z_i-z_j)^{n_{i,j}}$ where $n_{i,j}:=\min(m_i,m_j)$.

The successive polynomial eliminations of $z_\eta,\ldots,z_1$ can be derived as above from each polynomial equation $P_{0,\mu,k}(\lambda,z_1,\ldots,z_\eta)=0$. This gives $\ell$ polynomial equations 
$$\cR_{\mu,1}(\lambda)=0\ ,\ldots,\ \cR_{\mu,\ell}(\lambda)=0.$$
Satisfying this set of polynomial equations is a necessary condition for $\lambda$ to be an eigenvalue of $P$ on $\cB_{\gc}$. Finally the polynomial functions $\cR_{\mu,1},\ldots\cR_{\mu,\ell}$ depend on the $P(i,j)$'s and also on $(m_1,\ldots,m_s)$, since the linear system used to eliminate $(\alpha_{\lambda,k,\ell})\in\C^{\eta}$ involves coefficients $i(i-1)\cdots (i-k+1)$ for some finitely many integers $i$ and for $k=1,\ldots,m_i$ ($i=1,\ldots,s$). 
\end{proof}

To compute $\rhoc(P)$, we define the following (finite and possibly empty) sets:
$$\forall \mu\in\cM,\quad \Lambda_\mu:=\big\{\lambda\in\Lambda :\ \cR_{\mu,1}(\lambda)=0\ ,\ldots,\ \cR_{\mu,\ell}(\lambda)=0\big\}.$$
Let us denote by ${\cal Z}$ the (finite and possibly empty) set composed of all the complex numbers $\lambda\in\cup_{\mu\in{\cal M}} \Lambda_\mu$ such that Condition~$(ii)$ of Proposition~\ref{pro-rate-HRW} holds true.  
\begin{cor} \label{cor-rate}
Assume that Assumptions~\emph{(\ref{cond-bound-prob})}-\emph{(\ref{Def_HRW-bis})} and \emph{(\ref{MoySautBorne})} hold true and that $P$ is irreducible and aperiodic.   Then   
$$\rhoc(P) = \max\big(\dc,\max\{|\lambda|,\ \lambda\in {\cal Z}\}\big) \quad \text{where $\dc:=\phi(\gc)$}.$$
\end{cor}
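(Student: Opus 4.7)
The plan is to combine the abstract spectral identity for $\rho_V(P)$ from Proposition~\ref{CNS-qc-Vgeo} with the complete algebraic identification of the eigenvalues in the annulus $\Lambda$ given by Proposition~\ref{pro-rate-HRW} and Corollary~\ref{cor-step-3}.

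The first step is to verify that Proposition~\ref{CNS-qc-Vgeo} applies to $P$ acting on $\cB_{\gc}$, i.e.\ that $P$ is $V_{\gc}$-geometrically ergodic. By Proposition~\ref{pro-non-hom}, $P$ is power-bounded and quasi-compact on $\cB_{\gc}$ with $r_{\mathrm{ess}}(P)=\dc$; moreover the preamble of this section recalls that $\pi$ exists and satisfies $\pi(V_{\gc})<\infty$. It remains to check that $\lambda=1$ is simple and is the only eigenvalue of modulus one. For simplicity, any $f\in\cB_{\gc}\cap\ker(P-I)$ is bounded by Proposition~\ref{pro-tail-fct-propre}, and a bounded harmonic function of an irreducible chain on $\N$ is constant. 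For uniqueness of the peripheral eigenvalue, aperiodicity combined with the same boundedness from Proposition~\ref{pro-tail-fct-propre} yields a standard Perron--Frobenius-type argument ruling out eigenvalues $\lambda$ with $|\lambda|=1$ and $\lambda\neq 1$. Proposition~\ref{CNS-qc-Vgeo} then gives $\rhoc(P)\ge\dc$ together with, for every $r_0\in(\dc,1)$, the alternative $\rhoc(P)\le r_0$ if $\cV_{r_0}=\emptyset$ and $\rhoc(P)=\max\{|\lambda|:\lambda\in\cV_{r_0}\}$ otherwise.

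The second step is to identify the eigenvalues of $P$ on $\cB_{\gc}$ lying in $\Lambda=\{\dc<|\lambda|<1\}$ with the finite set ${\cal Z}$. By Proposition~\ref{pro-rate-HRW}, $\lambda\in\Lambda$ is an eigenvalue of $P$ on $\cB_{\gc}$ if and only if condition~$(ii)$ there holds. By Corollary~\ref{cor-step-3}(i), any such $\lambda$ annihilates the polynomials $\cR_{\mu,1},\ldots,\cR_{\mu,\ell}$ for the unique $\mu\in{\cal M}$ that encodes the multiplicities of the roots of $E_\lambda$ in ${\cal E}_\lambda^{-}$, so $\lambda\in\bigcup_{\mu\in{\cal M}}\Lambda_\mu$; by Corollary~\ref{cor-step-3}(ii), this membership becomes sufficient once condition~$(ii)$ of Proposition~\ref{pro-rate-HRW} is imposed. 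Hence the set of eigenvalues of $P$ in $\Lambda$ is precisely ${\cal Z}$, and is finite since each $\Lambda_\mu$ is the zero set of nontrivial polynomials.

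It remains to assemble the two pieces. If ${\cal Z}=\emptyset$, then $\cV_{r_0}=\emptyset$ for every $r_0\in(\dc,1)$ and the last assertion of Proposition~\ref{CNS-qc-Vgeo} gives $\rhoc(P)=\dc$. If ${\cal Z}\neq\emptyset$, picking any $r_0\in(\dc,\min_{\lambda\in{\cal Z}}|\lambda|]$ yields $\cV_{r_0}={\cal Z}$ and therefore $\rhoc(P)=\max\{|\lambda|:\lambda\in{\cal Z}\}$, which is automatically $\ge\dc$. Both cases are covered by the stated formula $\rhoc(P)=\max\bigl(\dc,\,\max\{|\lambda|:\lambda\in{\cal Z}\}\bigr)$. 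The main obstacle is in the first step, namely pinning down simplicity of $\lambda=1$ and the absence of other peripheral eigenvalues from irreducibility and aperiodicity; the rest of the proof is a clean bookkeeping of results already established in Sections~\ref{sec-mino} and~\ref{sub-sec-countable}.
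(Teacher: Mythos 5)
Your proposal is correct and follows the same route as the paper: establish $V_{\gc}$-geometric ergodicity (so that Proposition~\ref{CNS-qc-Vgeo} applies on $\cB_{\gc}$ with $\rc(P)=\dc$), identify the eigenvalues of $P$ in the annulus $\Lambda$ with ${\cal Z}$ via Proposition~\ref{pro-rate-HRW} and Corollary~\ref{cor-step-3}, and then read off $\rhoc(P)$ from the dichotomy in Proposition~\ref{CNS-qc-Vgeo}. The only difference is that you spell out why $\lambda=1$ is simple and the unique peripheral eigenvalue (via Proposition~\ref{pro-tail-fct-propre} and a Perron--Frobenius argument), whereas the paper cites this as known; this is a helpful expansion rather than a change of strategy.
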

\begin{proof}{}
Under the assumptions on $P$, we know from Proposition~\ref{CNS-qc-Vgeo} that the RW is $V_{\gc}$-geometrically ergodic.  
Since  $\widehat{r}_{ess}(P)=\dc$ from Proposition~\ref{pro-non-hom}, the corollary follows from Corollary~\ref{cor-step-3} and from Proposition~\ref{CNS-qc-Vgeo} applied either with any $r_0$ such that $\dc<r_0<\min\{|\lambda|,\ \lambda\in {\cal Z}\}$ if ${\cal Z}\neq\emptyset$, or with any $r_0$ such that $\dc<r_0<1$ if ${\cal Z}=\emptyset$. 
\end{proof}

\begin{rem} \label{rem-proc} 
When $\eta\geq2$ and $\mu:=(m_1,\ldots,m_s)$ with $s<\eta$, the set $\Lambda_\mu$ used in Corollary~\ref{cor-rate} may be reduced.  For the sake of simplicity, this fact has been omitted in Corollary~\ref{cor-rate}, but it is relevant in practice. Actually, when $s<\eta$, the  part $(ii)$ of Corollary~\ref{cor-step-3} can be specified since it requires that $E_\lambda(\cdot)$ admits roots of multiplicity $\geq2$. This involves some additional necessary conditions on $\lambda$ derived from some polynomial eliminations with respect to the derivatives of $E_\lambda(\cdot)$. 

For instance, in case $g=2$, $\eta=2$, $s=1$ (thus $\mu:=(2)$), a necessary condition on $\lambda$ for $E_\lambda(\cdot)$ to have a double root is that $E_\lambda(\cdot)$ and $E_\lambda'(\cdot)$ admits a common root. This leads to 
$$Q(\lambda) := \mathrm{Res}_z\big(E_\lambda,E_\lambda'\big)=0.$$
Consequently, if $g=2$ and $\eta=2$ (thus $\ell:=1$), then  Condition~$(ii)$ of Proposition~\ref{pro-rate-HRW} can be tested in case $s=1$ by using the following finite set 
$$\Lambda_\mu':= \Lambda_\mu \cap \{\lambda\in\Lambda :\ Q(\lambda)=0\}.$$
In general $\Lambda_\mu'$ is strictly contained in $\Lambda_\mu$. Even $\Lambda_\mu'$ may be empty while $\Lambda_\mu$ is not (see Subsection~\ref{further-ex}). 
\end{rem}

Proposition~\ref{pro-rate-HRW} and the above elimination procedure obviously extend to any $\gamma\in(1,\gamma_0)$ in place of $\gc$, where $\gamma_0$ is given in Lemma~\ref{NERI}. Of course $\dc = \phi(\gc)$ is then replaced by $\delta = \phi(\gamma)$.

\subsection{RWs with $g=d:=1$ : birth-and-death Markov chains} \label{sub-basic-ex-revis} 

%
Let $p,q,r\in[0,1]$ be such that $p+r+q=1$, and let $P$ be defined  by
\begin{equation} \label{hyp-ref-rand}
\begin{array}{c}
 P(0,0)\in(0,1), P(0,1)=1-P(0,0)\\[1mm]
  \forall n\ge 1,\ P(n,n-1) :=p,\quad P(n,n) := r,\quad P(n,n+1) := q \quad \text{with } 0<q<p. 
 \end{array}
\end{equation} 
 Note that $a_{-1}:=p,a_1:=q>0$ and (\ref{MoySautBorne}) holds true. We have $\gamma_0=p/q\in(1,+\infty)$ and $\gc:= \sqrt{p/q}\in(1,+\infty)$ is such that $\dc:=\min_{\gamma>1}\phi(\gamma)=\phi(\gc)<1$ (see Lemma~\ref{NERI}). Let $V_{\gc}:=\{\gc^n\}_{n\in\N}$ and its associated weighted-supremum space $\cB_{\gc}$. Here we have 
$$\widehat{r}_{ess}(P) = \dc = r+2\sqrt{pq}.$$
\begin{pro} \label{BD} 
Let $P$ be defined by Conditions~\emph{(\ref{hyp-ref-rand})}. The boundary transition probabilities are denoted by $P(0,0):=a, P(0,1):=1-a$ for some $a\in(0,1)$. 
Then $P$ is $V_{\gc}$-geometrically ergodic. Furthermore, defining  $a_0 := 1-q-\sqrt{pq}$, the convergence rate $\rhoc(P)$ of $P$ with respect to $\cB_{\gc}$ is given by: 
\leftmargini 1.5em   
\begin{itemize}
	\item when $a\in(a_0,1)$: 
  \begin{equation} \label{a0-2}
  	\rhoc(P) = r+2\sqrt{pq}\, ; 
  \end{equation} 
	\item when $a\in(0,a_0]$: 
\begin{enumerate}[(a)]
	\item in case $\, 2p \leq \big(1-q+\sqrt{pq}\big)^2$:
\begin{equation}\label{a0-0}
\rhoc(P) = r+2\sqrt{pq}\, ; 
\end{equation} 
  \item in case $\, 2p > \big(1-q+\sqrt{pq}\big)^2$, set $a_1 := p - \sqrt{pq}  - \sqrt{r\big(r+2\sqrt{pq}\big)}$:
\begin{subequations}
\begin{eqnarray}
& &  \rhoc(P) = \left|a + \frac{p(1-a)}{a-1+q}\right| \ \ \text{ when } a\in(0,a_1] \label{a0-1} \\
& & \rhoc(P) =  r+2\sqrt{pq}  \ \, \quad\qquad\text{ when } a\in[a_1,a_0). \label{a0-3}
\end{eqnarray}
\end{subequations}
\end{enumerate}
\end{itemize}
\end{pro}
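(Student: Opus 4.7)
The plan is to apply the elimination procedure of Corollary~\ref{cor-step-3} in its most favourable configuration $g=d=\eta=1$, and then conclude via Corollary~\ref{cor-rate}. First, $P$ is irreducible (since $p,q>0$ and $0<a<1$) and aperiodic (since $P(0,0)=a>0$); conditions (\ref{cond-bound-prob})--(\ref{Def_HRW-bis}) and (\ref{MoySautBorne}) hold, and $\phi(z)=p/z+r+qz$ yields the values $\gc=\sqrt{p/q}$ and $\dc=\phi(\gc)=r+2\sqrt{pq}$ recalled before the statement. Proposition~\ref{CNS-qc-Vgeo} then yields the $V_{\gc}$-geometric ergodicity, and Corollary~\ref{cor-rate} reduces the task to enumerating the eigenvalues of $P$ on $\cB_{\gc}$ lying in $\Lambda=\{\dc<|\lambda|<1\}$.

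The characteristic polynomial $E_\lambda(z)=qz^2+(r-\lambda)z+p$ has two roots whose product is $p/q=\gc^{\,2}$, and by Remark~\ref{rem-mod-z-strict} neither has modulus $\gc$; hence $\eta=1$. By Proposition~\ref{pro-rate-HRW} every eigenfunction must be of the form $f(n)=\alpha z^n$ with $z\in\cE_\lambda^{-}$, and the single boundary equation $\lambda f(0)=af(0)+(1-a)f(1)$ gives $\lambda=a+(1-a)z$. Substituting into $E_\lambda(z)=0$ yields $(p+r-a)z^2+(a-r)z-p=0$, which factors as $(z-1)\bigl((p+r-a)z+p\bigr)=0$. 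The root $z=1$ produces $\lambda=1\notin\Lambda$; the other root $z(a)=p/(a-1+q)$ produces the candidate $\lambda(a)=a+p(1-a)/(a-1+q)$, exactly the expression appearing in (\ref{a0-1}).

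The admissibility condition $|z(a)|<\gc$ reduces to $|a-1+q|>\sqrt{pq}$; the branch $a>1-q+\sqrt{pq}$ is infeasible because $p>q$ forces it above $1$, so it is equivalent to $a<a_0:=1-q-\sqrt{pq}$. Hence for $a\in[a_0,1)$ there is no eigenvalue in $\Lambda$ (at $a=a_0$ one has $|z|=\gc$, excluded by Remark~\ref{rem-mod-z-strict}), and Corollary~\ref{cor-rate} gives $\rhoc(P)=\dc$, proving (\ref{a0-2}). Assume henceforth $a\in(0,a_0)$. A short computation shows that $\lambda(a)=\dc$ admits only the double root $a=1-q+\sqrt{pq}>1$, which together with $\lambda(0)=-p/(1-q)<\dc$ forces $\lambda(a)<\dc$ throughout; hence $|\lambda(a)|>\dc\Leftrightarrow\lambda(a)<-\dc$. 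Multiplying by the negative denominator $a-1+q$ transforms this last inequality into
\[Q(a):=a^2-2(p-\sqrt{pq})\,a+\bigl(p(1-\dc)-r\dc\bigr)>0,\]
and a direct expansion using $p+q+r=1$ and $\dc=r+2\sqrt{pq}$ yields the factorization $Q(a)=(a-a_1)(a-a_2)$ with $a_1=p-\sqrt{pq}-\sqrt{r\dc}$, $a_2=p-\sqrt{pq}+\sqrt{r\dc}$, and $a_2-a_0=\sqrt{r\dc}-r\ge0$. Since $a<a_0\le a_2$, one has $a-a_2<0$, whence $Q(a)>0\Leftrightarrow a<a_1$, which in particular forces $a_1>0$.

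The final step converts the sign of $a_1$ into the dichotomy of the statement via the identity $(p-\sqrt{pq})^2-r\dc=2p-(1-q+\sqrt{pq})^2$, obtained by expanding both sides and simplifying with $p+q+r=1$; combined with $p>\sqrt{pq}$ (from $p>q$) this gives $a_1\le 0\Leftrightarrow 2p\le(1-q+\sqrt{pq})^2$. In case (a), $Q(a)\le0$ on $(0,a_0)$, so $|\lambda(a)|\le\dc$ and Corollary~\ref{cor-rate} yields $\rhoc(P)=\dc$, proving (\ref{a0-0}). In case (b), for $a\in(0,a_1)$ we have $|\lambda(a)|>\dc$; the bound $|\lambda(a)|<1$ follows from $\dc<1$ (itself a consequence of $p\neq q$) and a short continuity argument anchored at $\lambda(a_1)=-\dc\in(-1,0)$, so $\lambda(a)\in\Lambda$ is an eigenvalue and Corollary~\ref{cor-rate} gives (\ref{a0-1}); for $a\in[a_1,a_0)$ one has $|\lambda(a)|\le\dc$, whence (\ref{a0-3}). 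The main technical obstacle is the algebraic identity that reconciles the two formulations of the case distinction, together with verifying $|\lambda(a)|<1$ on $(0,a_1]$; both reduce to elementary quadratic manipulations.
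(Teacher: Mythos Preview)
Your proof is correct and follows the same overall strategy as the paper: apply the elimination procedure of Corollary~\ref{cor-step-3} with $g=d=\eta=1$, obtain the unique candidate $\lambda(a)=a+p(1-a)/(a-1+q)$ with $z(a)=p/(a-1+q)$, identify the threshold $a_0$ via $|z(a)|<\gc$, and then locate $\lambda(a)$ relative to $\Lambda$ to conclude through Corollary~\ref{cor-rate}.

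The organisation of the final case analysis differs. The paper computes $\lambda'(a)=1-pq/(a-1+q)^2$, observes that $\lambda(\cdot)$ is increasing on $(-\infty,a_0]$ with $\lambda(a_0)=r-2\sqrt{pq}$, defines $a_1$ as the unique solution of $\lambda(a)=-\dc$, and distinguishes the two subcases by the sign of $\lambda(0)-\lambda(a_1)=\big((q-\sqrt{pq}-1)^2-2p\big)/(1-q)$. You instead rewrite $\lambda(a)<-\dc$ as $Q(a)>0$, factor $Q(a)=(a-a_1)(a-a_2)$ explicitly, and connect the sign of $a_1$ to the stated dichotomy via the identity $(p-\sqrt{pq})^2-r\dc=2p-(1-q+\sqrt{pq})^2$. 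Your route makes the appearance of the threshold $2p\gtrless(1-q+\sqrt{pq})^2$ more transparent and avoids differentiating $\lambda(\cdot)$; the paper's monotonicity argument, on the other hand, settles the bound $|\lambda(a)|<1$ on $(0,a_1]$ immediately from $\lambda(0)=-p/(1-q)\in[-1,0)$, whereas you defer this to a ``short continuity argument'' that still needs one concrete ingredient (either monotonicity, or the observation that $\lambda(a)=-1$ has its positive root beyond $a_1$). This is the only place your write-up is lighter than the paper's; the claimed elementary verification does go through.
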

When $r:=0$, such results have been obtained in \cite{RobTwe99,Bax05,LunTwe96} by using various methods involving conditions on $a$ (see the end of Introduction). Let us specify the above formulas in case $r:=0$. We have $a_0 = a_1 = p-\sqrt{pq} = (p-q)/(1+\sqrt{q/p})$, and it can be easily checked that $2p > (1-q+\sqrt{pq})^2$. Then the properties (\ref{a0-2}), (\ref{a0-1}), (\ref{a0-3}) then rewrite as: 
$\rhoc(P) = (pq+(a-p)^2)/|a-p|$ when $a\in(0,a_0]$, and $\rhoc(P) = 2\sqrt{pq}$ when $a\in(a_0,1)$. 

\begin{proof}{}
We apply the elimination procedure of Section~\ref{rate-general-procedure}. 
Then $\Lambda:=\{\lambda\in\C:\, \dc < |\lambda| < 1\}$ with $\dc:=r+2\sqrt{pq}$. The characteristic polynomial $E_\lambda(\cdot)$ is 
$$E_\lambda(z) := qz^2 + (r - \lambda) z + p.$$
A simple study of the graph of $\phi(t) := p/t+r+qt$ on $\R\setminus\{0\}$ shows that, for any $\lambda\in (\dc,1)$, the equation $\phi(z)=\lambda$ (ie.~$E_\lambda(z)=0$) admits a solution in $(1,\gc)$ and another one in $(\gc,+\infty)$, so that $N(\lambda)=1$. It follows from Proposition~\ref{pro-rate-HRW} that $\eta=1$.  
Thus the linear elimination used in Corollary~\ref{cor-step-3} is here trivial. Indeed, a necessary condition for $f:=\{z^n\}_{n\in\N}$ to satisfy $Pf=\lambda f$ is obtained by eliminating the variable $z$ with respect to the boundary equation $(Pf)(0) = \lambda f(0)$, namely $P_0(\lambda,z):=a+(1-a)z = \lambda$, and Equation $E_\lambda(z) = 0$. This leads to 
\begin{eqnarray} 
& & P_1(\lambda,z):=Res_{z}(P_0,E_{\lambda})=(1-\lambda)\big[(\lambda-a)(1-a-q) +p(1-a) \big] \label{eq-lambda-glob}.
\end{eqnarray}
In the special case $a = 1-q$,  the only solution of (\ref{eq-lambda-glob}) is $\lambda=1$. Corollary~\ref{cor-rate} then gives $\rhoc(P) = r+2\sqrt{pq}$. 

Now assume that $a \neq 1-q$. Then $\lambda=1$ is a solution of (\ref{eq-lambda-glob}) and the other solution of (\ref{eq-lambda-glob}), say $\lambda(a)$, and the associated complex number, say $z(a)$, are given by the following formulas (use $a+(1-a)z = \lambda$ to obtain $z(a)$): 
\begin{equation*} \label{lambda=}
\lambda(a) :=  a + \frac{p(1-a)}{a-1+q}\in\R \quad \text{ and } \quad  z(a) := \frac{p}{a+q-1}\in\R.
\end{equation*}
To apply Corollary~\ref{cor-rate} we must  find the values $a\in(0,1)$ for which both conditions $\dc < |\lambda(a)| < 1$ and $|z(a)|\leq \gc$ hold. Observe that
\begin{equation*} \label{machin}
|z(a)|\leq \gc  \ \Leftrightarrow\ |a-1+q|\geq \sqrt{pq}. 
\end{equation*}
Hence, if $a\in(a_0,1)$ (recall that $a_0 := 1-q-\sqrt{pq}$), then $|z(a)| >\gc$. This gives (\ref{a0-2}). 

Now let $a\in(0,a_0]$. Then $|z(a)| \leq \gc$. Let us study $\lambda(a)$. 
We have $\lambda'(a) = 1-pq/(a-1+q)^2$, so that 
$a\mapsto \lambda(a)$ is increasing on $(-\infty,a_0]$ from $-\infty$ to $\lambda(a_0)=r-2\sqrt{pq}$.  
Thus
$$\forall a\in(0,a_0],\quad \lambda(a)\leq r-2\sqrt{pq} < r+2\sqrt{pq}.$$ 
and the equation $\lambda(a) = -(r+2\sqrt{pq})$ has a unique solution $a_1\in(-\infty,a_0)$. 
Note that $a_1<a_0$ and $\lambda(a_1) = -(r+2\sqrt{pq})$, that $\lambda(0) = p/(q-1)\in[-1,0)$ and finally that 
$$\lambda(0) - \lambda(a_1) = \frac{p}{q-1} +r+2\sqrt{pq} = \frac{(q-\sqrt{pq}-1)^2 - 2p}{1-q}.$$
When $2p \leq (1-q+\sqrt{pq})^2$, we obtain (\ref{a0-0}). Indeed $|\lambda(a)| < r+2\sqrt{pq}$ since 
$$\forall a\in(0,a_0],\quad -(r+2\sqrt{pq}) = \lambda(a_1) \leq \lambda(0) < \lambda(a) < r+2\sqrt{pq}.$$
When $2p > (1-q+\sqrt{pq})^2$, we have $a_1\in(0,a_0]$ and:  
\leftmargini 1.5em
\begin{itemize}
	\item if $a\in(0,a_1)$, then (\ref{a0-1}) holds. Indeed $r+2\sqrt{pq} < |\lambda(a)| < 1$ since 
	$$\forall a\in(0,a_1],\quad -1 \leq \lambda(0) < \lambda(a) < \lambda(a_1) = -(r+2\sqrt{pq})\, ;$$ 
	\item if $a\in[a_1,a_0]$, then (\ref{a0-3}) holds. Indeed  $|\lambda(a)| < r+2\sqrt{pq}$ since 
	$$-(r+2\sqrt{pq}) =\lambda(a_1) \leq \lambda(a) < r+2\sqrt{pq}.$$ 
\end{itemize} 
\end{proof}
\begin{rem}[Discussion on the $\ell^2(\pi)$-spectral gap and the decay parameter] \label{l2-spectral-gap} ~\newline
As men\-tio\-ned in the introduction, we are not concerned with the usual $\ell^2(\pi)$ spectral gap $\rho_2(P)$ for Birth-and-Death Markov Chains (BDMC). In particular, we can not compare our results  with that of \cite{DooSch95}. To give a  comprehensive discussion on \cite{DooSch95}, let $P$ be a kernel of an BDMC defined by \emph{(\ref{hyp-ref-rand})} with invariant probability measure $\pi$. $P$ is reversible with respect to $\pi$. It can be proved that the decay parameter of $P$, denoted by $\gamma$ in \cite{DooSch95} but by $\gamma_{DS}$ here to avoid confusion with our parameter $\gamma$, is also the rate of convergence $\rho_2(P)$: 
\begin{equation*} \label{gam-rho}
\gamma_{DS} =\rho_2(P) := \lim_n{\|P^n-\Pi\|_2}^{\frac{1}{n}},
\end{equation*}
where $\Pi f := \pi(f){\bf 1}$ and $\|\cdot\|_2$ denotes the operator norm on $\ell^2(\pi)$. When $P$ is assumed to be $V_{\gc}$-geometrically ergodic with $V:=\{\gc^n\}_{n\in\N}$, it follows from \cite[Th.~6.1]{Bax05},  that 
\begin{equation*}\label{2}
	\gamma_{SD} \leq \rhoc(P).
\end{equation*} 
Consequently the bounds of the decay parameter $\gamma_{DS}$ given in \cite{DooSch95} cannot provide bounds for $\rhoc(P)$ since the converse inequality $\rhoc(P) \leq \gamma_{DS}$ is not known to the best of our knowledge. Moreover, even if the equality $\gamma_{DS} = \rhoc(P)$ was true, the bounds obtained in our Proposition~\ref{BD} could be derived from \cite{DooSch95} only for some specific values of  $P(0,0)$. Indeed the difficulty in \cite[p.~139-140]{DooSch95} to cover all the values $P(0,0)\in(0,1)$ is that the spectral measure associated with Karlin and McGregor polynomials cannot be easily computed, except for some specific values of $P(0,0)$ (see \cite{Kov09} for a recent contribution).  
\end{rem}

\subsection{A non-reversible case : RWs with $g=2$ and $d=1$ } \label{further-ex} 

Let $P:=(P(i,j))_{(i,j)\in\N^2}$ be defined by 
\begin{gather}
P(0,0) = a \in(0,1),\quad P(0,1) = 1-a,\quad P(1,0) = b\in(0,1), \quad P(1,2) = 1-b \label{bound-deux-vois} \\[2mm]
\forall n\geq 2,\ P(n,n-2) = a_{-2}>0,\ P(n,n-1) = a_{-1},\ P(n,n) = a_{0}, \ P(n,n+1) = a_1 >0. \nonumber 
\end{gather}
The form of boundary probabilities in (\ref{bound-deux-vois}) is chosen for convenience. Other (finitely many) boundary probabilities could be considered provided that $P$ is irreducible and aperiodic. To illustrate the procedure proposed in Section~\ref{rate-general-procedure} for this class of RWs, we also specify the numerical values 
$$a_{-2} := 1/2,\ a_{-1} := 1/3,\ a_0 = 0,\ a_1 := 1/6.$$ 
The procedure could be developed in the same way for any other values of $(a_{-2},a_{-1},a_0,a_1)$ satisfying $a_{-2}, a_1 >0$ and Condition (\ref{MoySautBorne}) i.e. $a_1 < 2a_{-2} + a_{-1}$. Here we have 
$$\phi(t) := \frac{1}{2 t^2} + \frac{1}{3t} + \frac{t}{6} = 1+ \frac{1}{6t^2}(t-1)(t^2-5t-3). $$ 
Function $\phi(\cdot)$ has a minimum over $(1,+\infty)$ at  $\gc\approx 2.18$, with $\dc := \phi(\gc) \approx 0.621$. Let $V_{\gc} :=\{\gc^n\}_{n\in\N}$ and let $\cB_{\gc}$ be the associated weighted space. We know from Proposition~\ref{pro-non-hom} and from irreducibility and aperiodicity properties  that $\widehat{r}_{ess}(P) = \dc$ and $P$ is $V_{\gc}\, -$geometrically ergodic (see Proposition~\ref{CNS-qc-Vgeo}). The  polynomial $E_\lambda(\cdot)$ is 
\begin{equation*} \label{def-poly-charac-2vois}
\forall z\in\C,\quad E_{\lambda}(z) := \frac{z^3}{6} - \lambda z^2 + \frac{z}{3} + \frac{1}{2}. 
\end{equation*}
A simple examination of the graph of $\phi(\cdot)$ shows that $\eta=2$.
Thus the set $\cM$ of Corollary~\ref{cor-rate} is $\cM:=\{(1,1),(2)\}$. Next, the constructive proof of Corollary~\ref{cor-step-3} provides the following procedure to compute  $\rhoc(P)$ (see also Remark~\ref{rem-proc} in the second case). Recall that $\Lambda := \{\lambda\in\C: \dc < |\lambda| < 1\}$.

\paragraph{First case: $\mu=(1,1)$}
\begin{enumerate}[(a)]
	\item  When $\lambda\in \Lambda$ is such that ${\cal E}_\lambda^{-}$ is composed of $2$ simple roots of $E_\lambda(\cdot)$, a necessary condition for $\lambda$ to be an  eigenvalue of $P$ on $\cB_{\gc}$ is that 
	$$R_1(\lambda) := \mathrm{Res}_{z_{1}}\big(P_1,E_\lambda\big) = 0,$$
where 
\begin{eqnarray*} 
P_1(\lambda,z_1) : = \mathrm{Res}_{z_{2}}\big(P_{0},E_\lambda\big)
&=& \left|\begin{array}{ccccc}
1/6 &0 & A(\lambda,z_1) & 0 & 0 \\
-\lambda & 1/6 & B(\lambda,z_1) & A(\lambda,z_1) & 0 \\
1/3 & -\lambda & C(\lambda,z_1) & B(\lambda,z_1) & A(\lambda,z_1) \\
1/2 & 1/3 & 0 & C(\lambda,z_1) & B(\lambda,z_1) \\
0 & 1/2 & 0 & 0 & C(\lambda,z_1) 
\end{array}
\right|.  
\end{eqnarray*}
and $P_{0}(\lambda,z_1,z_2) := A(\lambda,z_1)\, {z_2}^2 + B(\lambda,z_1)\, z_2 + C(\lambda,z_1)$  is given by  
\begin{eqnarray}
P_{0}(\lambda,z_1,z_2)  :=\left|\begin{array}{cc}
(1-a) & a+(1-a)z_2-\lambda  \\[0.12cm]
(1-b)(z_1+z_2)-\lambda  & b+(1-b)z_2^2-\lambda z_2   
\end{array}
\right|. \label{P0-mult-1}
\end{eqnarray}
$P_{0}(\lambda,z_1,z_2)$ is derived using (\ref{eli-linear}) from    
\begin{eqnarray*} 
P_{0,1}(\lambda,z_1,z_2) &:=& 
\left|\begin{array}{cc}
a+(1-a)z_1-\lambda & a+(1-a)z_2-\lambda  \\[0.12cm]
b+(1-b)z_1^2-\lambda z_1 & b+(1-b)z_2^2-\lambda z_2   
\end{array}
\right| = (z_1-z_2) P_{0}(\lambda,z_1,z_2). 
\end{eqnarray*} 

	\item {\it Sufficient part.} Consider 
$$\Lambda_{(1,1)} = \text{Root}\, (R_1)\cap\Lambda = \text{Root}\, (R_1)\cap \big\{\lambda\in\C : 0.621\approx \dc< |\lambda|<1\big\}.$$
For every $\lambda\in \Lambda_{(1,1)}$: 
\begin{enumerate}[(i)]
	\item Check that $E_\lambda(z)=0$ has two simple roots $z_1$ and $z_2$ such that $|z_i| < \gc\approx 2.18$.  
	\item If (i) is OK, then test if $P_{0}(\lambda,z_1,z_2)=0$ with $P_{0}$ given in (\ref{P0-mult-1}). 
	\item[] \hspace*{-0.8cm} If (i) and (ii) are OK, then $\lambda$ is an eigenvalue of $P$ on $\cB_{\gc}$. 
\end{enumerate}
\end{enumerate}
\paragraph{Second case: $\mu=(2)$.}
\begin{enumerate}[(a)]
\item {\it }  When $\lambda\in \Lambda$ is such that ${\cal E}_\lambda^{-}$ is composed of a double root of $E_\lambda(\cdot)$, a necessary condition for $\lambda$ to be an  eigenvalue of $P$ on $\cB_{\gc}$ is that (see Remark~\ref{rem-proc})
	$$Q(\lambda) = 0 \quad \text{and} \quad R_2(\lambda) := \mathrm{Res}_{z_{1}}\big(P_1,E_\lambda\big) = 0,$$
	where 
\begin{eqnarray*} 
Q(\lambda) : =
\left|\begin{array}{ccccc}
1/6 &0 & 1/2 & 0 & 0  \\
-\lambda & 1/6 & -2\lambda & 1/2 & 0 \\
1/3 & -\lambda & 1/3 & -2\lambda & 1/2 \\
1/2 & 1/3 & 0 & 1/3 & -2\lambda \\
0 & 1/2 & 0 & 0 & 1/3 \\
\end{array}
\right| 
\end{eqnarray*}
and 
\begin{eqnarray*} 
P_1(\lambda) : = \mathrm{Res}_{z_{1}}\big(P_0,E_\lambda\big)
&=& \left|\begin{array}{ccccc}
1/6 &0 & A(\lambda) & 0 & 0 \\
-\lambda & 1/6 & B(\lambda) & A(\lambda) & 0 \\
1/3 & -\lambda & C(\lambda) & B(\lambda) & A(\lambda) \\
1/2 & 1/3 & 0 & C(\lambda) & B(\lambda) \\
0 & 1/2 & 0 & 0 & C(\lambda) 
\end{array}
\right|.  
\end{eqnarray*}
where $P_0(\lambda,z_1) := \ A(\lambda)\, z_1^2 + B(\lambda)\, z_1 + C(\lambda)$ is given by 
\begin{eqnarray} 
P_0(\lambda,z_1) &:=& 
\left|\begin{array}{cc}
a+(1-a)z_1-\lambda & 1-a  \\[0.12cm]
b+(1-b)z_1^2-\lambda z_1 & 2(1-b)z_1-\lambda 
\end{array}
\right|. \label{P0-mult-2}
\end{eqnarray}

  \item {\it Sufficient part.} Consider 
$$\Lambda_{(2)}' = \text{Root}\, (Q)\cap \Lambda_{(2)} = \text{Root}\, (Q)\cap \text{Root}\, (R_2) \cap \big\{\lambda\in\C : 0.621\approx \dc< |\lambda|<1\big\} .$$
For every $\lambda\in \Lambda_{(2)}'$: 
\begin{enumerate}[(i)]
	\item Check that Equation $E_\lambda(z)=0$ has a double root $z_1$ such that $|z_1| < \gc\approx 2.18$. 
	\item If (i) is OK, then test if $P_0(\lambda,z_1)=0$ with $P_0$ given in (\ref{P0-mult-2}).
	\item[] \hspace*{-0.8cm} If (i) and (ii) are OK, then $\lambda$ is an eigenvalue of $P$ on $\cB_{\gc}$. 
\end{enumerate}
\end{enumerate}
\paragraph{Final results}
Define ${\cal Z}_{(1,1)}$ as the set of all the $\lambda\in \Lambda_{(1,1)}$ satisfying (i)-(ii) in the first case, and ${\cal Z}_{(2)}$ as the set of all the $\lambda\in \Lambda_{(2)}'$ satisfying (i)-(ii) in the second one. Finally set ${\cal Z} := {\cal Z}_{(1,1)}\cup{\cal Z}_{(2)}$. Then  
$$\rhoc(P) = \max\big(\dc,\max\{|\lambda|,\ \lambda\in {\cal Z}\}\big).$$
The results (using Maple computation engine) for different instances of the values of boundary transition probabilities are reported in Table~\ref{Table}. In these specific examples, note that $\Lambda_{(2)}'$ is always the empty set. As expected, we obtain that $\rho_{\gc}(P)\nearrow 1$ when $(a,b)\r (0,0)$. 
\begin{table}[h]
\centering
\begin{tabular}{c||c|c||c|c||c|c} \hline
$(a,b)$ & $\Lambda_{(1,1)}$ & ${\cal Z}_{(1,1)}$ & $\Lambda_{(2)}'$ & ${\cal Z}_{(2)}$ & $\dc$ & $\rhoc(P)$    \\\hline
$(1/2,1/2)$ & $\begin{array}{c}
-0.625\pm 0.466 i, \\
-0.798,0.804 
\end{array}$ & $\emptyset$ & $\emptyset$ & $\emptyset$ & 0.621 & 0.621\\\hline
$(1/10,1/10)$ & $\begin{array}{c}
-0.681\pm 0.610 i \\
-0.466\pm -0.506 i \\
-0.384\pm 0.555 i \\
\end{array}$ & $\{-0.466\pm 0.506 i\}$ & $\emptyset$ & $\emptyset$ & 0.621 & 0.688 \\\hline
$(1/50,1/50)$ & $\begin{array}{c}
-0.598 \pm 0.614 i \\
-0.383\pm 0.542 i  \\
-0.493\pm 0.574 i \\
-0.477\pm 0.584 i \\
0.994
\end{array}$& $\{-0.493\pm 0.574 i\}$ & $\emptyset$ & $\emptyset$ & 0.621 & 0.757\\\hline
\end{tabular}
\caption{Convergence rate with different values of boundary transition probabilities $(a,b)$}
\label{Table}
\end{table}

\section{Convergence rate for RWs with unbounded increments} \label{sect-unbounded} 

In this subsection, we propose two instances of RW on $\X:=\N$ with unbounded increments for which estimate of the convergence rate with respect to some weighted-supremum space $\cB_V$ can be obtained using Proposition~\ref{cor-qc-bis} and Proposition~\ref{CNS-qc-Vgeo}. The first example is from \cite{MalSpi95}. The second one is a reversible transition kernel $P$ inspired from the ``infinite star'' example in \cite{Ros96}. Note that using a result of \cite{Bax05} (see Remark~\ref{l2-spectral-gap}), estimates of $\rho_V(P)$ with respect to $\cB_V$ may be useful to obtain estimates on the usual spectral gap $\rho_2(P)$ with respect to Lebesgue's space $\ell^2(\pi)$. Recall that the converse is not true in general.

\subsection{A non-reversible RW with unbounded increments \cite{MalSpi95}} \label{ex-speksma-1} 
Let $P$ be defined by 
$$\forall n\geq 1,\ P(0,n) := q_n,\quad \forall n\geq 1,\ P(n,0) := p,\ P(n,n+1) := q = 1-p,$$
with $p\in(0,1)$ and $q_n\in[0,1]$ such that $\sum_{n\geq1}q_n=1$. 

\begin{pro} \label{spect-gap-speksma-1}
Assume that $\gamma\in(1,1/q)$ is such that $\sum_{n\geq1}q_n\gamma^n < \infty$. Then $r_{ess}(P) \leq q\gamma$. Moreover $P$ is $V_\gamma$-geometrically ergodic with convergence rate   
$\rho_{V_\gamma}(P) \leq \max(q\gamma,p)$.
\end{pro}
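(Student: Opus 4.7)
The proof reduces to verifying the two hypotheses of Proposition~\ref{CNS-qc-Vgeo}, namely quasi-compactness on $\cB_{V_\gamma}$ together with a peripheral spectrum analysis. The plan is: (i) compute $r_{ess}(P)$ from the drift condition via Proposition~\ref{cor-qc-bis}, (ii) solve the eigenvalue equation $Pf=\lambda f$ for $\lambda\in\C$ with $q\gamma<|\lambda|\leq 1$, and (iii) assemble the bound through Proposition~\ref{CNS-qc-Vgeo}.

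For step~(i), I would compute $(PV_\gamma)(n)=p+q\gamma^{n+1}=p+(q\gamma)V_\gamma(n)$ for $n\geq 1$, and $(PV_\gamma)(0)=\sum_{n\geq 1}q_n\gamma^n<\infty$ by hypothesis. Hence $\ell_1=\limsup_n(PV_\gamma)(n)/V_\gamma(n)=q\gamma<1$, so Condition~(\ref{cond-D}) holds and Proposition~\ref{cor-qc-bis} yields $r_{ess}(P)=L\leq q\gamma$; in particular $P$ is power-bounded and quasi-compact on $\cB_{V_\gamma}$.

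For step~(ii), fix $\lambda$ with $q\gamma<|\lambda|<1$ and a nonzero $f\in\cB_{V_\gamma}$ solving $Pf=\lambda f$. The equation at state $n\geq 1$ becomes the linear recursion $f(n+1)=(\lambda/q)f(n)-(p/q)f(0)$, whose general solution is $f(n)=A(\lambda/q)^n+pf(0)/(\lambda-q)$ (well-defined since $|\lambda|>q\gamma>q$ forces $\lambda\neq q$). The inclusion $f\in\cB_{V_\gamma}$ combined with $|\lambda/q|>\gamma$ forces $A=0$, so $f$ is constant on $\N^*$ and equal to $pf(0)/(\lambda-q)$. Injecting this into the boundary equation $\sum_{n\geq 1}q_nf(n)=\lambda f(0)$, and using $\sum q_n=1$, yields (for $f(0)\neq 0$) the quadratic $\lambda^2-q\lambda-p=0$, whose roots are exactly $\lambda=1$ and $\lambda=-p$ (since $q^2+4p=(2-q)^2$).

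For step~(iii), the eigenvalue $\lambda=1$ admits the constant $1_\X$ as eigenfunction and is therefore simple; no other eigenvalue of modulus one can exist, since the only remaining candidate $\lambda=-p$ satisfies $|-p|=p<1$. Together with the existence of an invariant probability $\pi$ with $\pi(V_\gamma)<\infty$ (which I would get from the drift condition via standard Foster--Lyapunov arguments on $\N$, using the implicit irreducibility and aperiodicity of $P$), Proposition~\ref{CNS-qc-Vgeo} delivers the $V_\gamma$-geometric ergodicity. For the rate, any $r_0\in(\max(q\gamma,p),1)$ gives $\cV_{r_0}=\emptyset$, hence $\rho_{V_\gamma}(P)\leq r_0$, and letting $r_0\searrow\max(q\gamma,p)$ concludes. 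The main delicate point is the $A=0$ argument in step~(ii): it is what allows the infinite state-dependent boundary condition to reduce to the finite algebraic equation whose two roots fully describe the peripheral spectrum.
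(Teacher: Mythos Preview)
Your proof is correct and follows the same line as the paper's: the drift computation feeds Proposition~\ref{cor-qc-bis}, then the recursion $f(n+1)=(\lambda/q)f(n)-(p/q)f(0)$ together with the growth constraint $f\in\cB_{V_\gamma}$ kills the homogeneous part, and the boundary equation forces $\lambda\in\{1,-p\}$. One small point: in step~(iii), simplicity of $\lambda=1$ does not follow merely from exhibiting $1_\X$ as an eigenfunction---you need to run your step~(ii) analysis at $\lambda=1$ as well (the paper treats the whole range $\max(q\gamma,p)<|\lambda|\le 1$ in one go) to conclude that every eigenfunction in $\cB_{V_\gamma}$ for $\lambda=1$ is constant.
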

\begin{proof}{}
We have: 
$\forall n\geq1,\ (PV_\gamma)(n) = q\gamma^{n+1} + p$. Thus, if $\gamma\in(1,1/q)$ and $\sum_{n\geq1}q_n\gamma^n < \infty$, then Condition~(\ref{cond-D}) holds with $V_\gamma$, and we have $\delta_{V_\gamma}(P) \leq q\gamma$. Therefore it follows from Proposition~\ref{cor-qc-bis} that $r_{ess}(P) \leq q\gamma$. Now  Proposition~\ref{CNS-qc-Vgeo} is applied with any $r_0>\max(q\gamma,p)$. Let $\lambda\in\C$ be such that $\max(q\gamma,p) < |\lambda| \leq1$, and let $f\in\cB_\gamma$, $f\neq0$, be such that $Pf=\lambda f$. We obtain $f(n) = (\lambda/q)f(n-1) - pf(0)/q$ for any $n\geq 2$, so that 
$$\forall n\geq2,\quad f(n) = \left(\frac{\lambda}{q}\right)^{n-1}\left(f(1)- \frac{pf(0)}{\lambda-q}\right) + \frac{pf(0)}{\lambda-q}.$$
Since $f\in\cB_{V_\gamma}$  and $|\lambda|/q > \gamma$, we obtain $f(1)= pf(0)/(\lambda-q)$, and consequently: $\forall n\geq1,\ f(n)= pf(0)/(\lambda-q)$. Next the equality $\lambda f(0) = (Pf)(0) = \sum_{n\geq1} q_nf(n)$ gives: $\lambda f(0) = pf(0)/(\lambda-q)$ since $\sum_{n\geq1}q_n=1$. We have $f(0)\neq0$ since we look for 
$f\neq0$. Thus $\lambda$ satisfies $\lambda^2 - q\lambda - p=0$, that is: $\lambda = 1$ or $\lambda =-p$. The case $\lambda =-p$ has not to be considered since $|\lambda| > \max(q\gamma,p)$. 
If $\lambda = 1$, then
$f(n)=f(0)$ for any $n\in\N$, so that $\lambda=1$ is a simple eigenvalue of $P$ on $\cB_\gamma$ and is the only eigenvalue 
such that $\max(q\gamma,p) < |\lambda| \leq1$. Then Proposition~\ref{CNS-qc-Vgeo} gives the second conclusion of Proposition~\ref{spect-gap-speksma-1}. 
\end{proof}

Note that $p$ cannot be dropped in the inequality $\rho_{V_\gamma}(P) \leq \max(q\gamma,p)$ since $\lambda=-p$ is an eigenvalue of $P$ on $\cB_\gamma$ with corresponding eigenvector
$f_p:=(1,-p,-p,\dots)$.

\subsection{A reversible RW inspired from \cite{Ros96}} \label{ex-rosen}
Let $\{\pi_n\}_{n\in\N}$ be a probability distribution (with $\pi_n>0$ for every $n\in\N$) and $P$ be  defined by 
$$\forall n\in\N,\ P(0,n)=\pi_n \quad \text{and} \quad  \forall n\ge 1,\  P(n,0)=\pi_0,\ P(n,n)=1-\pi_0.$$
It is easily checked that $P$ is reversible with respect to $\{\pi_n\}_{n\in\N}$, so that $\{\pi_n\}_{n\in\N}$ is an invariant probability distribution of $P$.  
\begin{pro} \label{pro-rhoV-ex}
Assume that there exists $V\in[1,+\infty)^{\N}$ such that $V(0)=1$, $V(n)\r +\infty$ as $n\r+\infty$ and  
 $\pi(V):=\sum_{n\geq0} \pi_n V(n) < \infty$.
Then $P$ is $V$-geometrically ergodic with $\rho_V(P)\leq 1-\pi_0$. 
\end{pro}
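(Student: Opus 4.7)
The plan is to combine the quasi-compactness bound from Proposition~\ref{cor-qc-bis} with a direct elementary spectral analysis of $P$ on $\cB_V$, and then invoke Proposition~\ref{CNS-qc-Vgeo} to upgrade this into $V$-geometric ergodicity with the desired rate.

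First I would check that the weak drift condition~(\ref{cond-D}) holds with $N=1$ and $\delta:=1-\pi_0$. A direct computation gives $(PV)(n) = \pi_0 + (1-\pi_0)V(n)$ for $n\geq1$ and $(PV)(0) = \pi(V)<\infty$. Hence $PV \leq (1-\pi_0)V + b\,1_{\X}$ with, say, $b:= \pi(V)$, so that (\ref{cond-D}) holds and $\delta_V(P)\leq 1-\pi_0$. Since $\X=\N$ and $V(n)\to+\infty$, Proposition~\ref{cor-qc-bis} yields that $P$ is power-bounded and quasi-compact on $\cB_V$ with $r_{ess}(P) = \delta_V(P) \leq 1-\pi_0$.

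Next I would determine all eigenvalues $\lambda\in\C$ of $P$ on $\cB_V$ satisfying $1-\pi_0 < |\lambda|\leq 1$. Let $f\in\cB_V\setminus\{0\}$ solve $Pf=\lambda f$. For every $n\geq 1$ the eigen-equation reads $\pi_0 f(0) + (1-\pi_0)f(n) = \lambda f(n)$, i.e.~$\pi_0 f(0) = (\lambda-1+\pi_0)f(n)$. Since $\lambda\neq 1-\pi_0$ under our assumption $|\lambda|>1-\pi_0$, this forces $f(n) = \pi_0 f(0)/(\lambda-1+\pi_0)$ to be \emph{constant} on $\{n\geq1\}$. If $f(0)=0$ then also $f(n)=0$ for $n\geq1$, contradicting $f\neq0$; so $f(0)\neq0$. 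Plugging the constant expression into the boundary equation $\lambda f(0) = \sum_{k\geq 0}\pi_k f(k)$ and using $\sum_{k\geq 0}\pi_k = 1$, a short manipulation collapses to $\lambda(\lambda-1) = 0$. Hence $\lambda = 1$ is the only admissible eigenvalue in the annulus $1-\pi_0 < |\lambda|\leq 1$, and the associated eigenspace is $\C\cdot 1_{\X}$, proving that $1$ is a simple eigenvalue and the unique peripheral eigenvalue.

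Finally I would conclude via Proposition~\ref{CNS-qc-Vgeo}: the combination of quasi-compactness, power-boundedness, simplicity of $1$ and absence of other peripheral eigenvalues gives the $V$-geometric ergodicity of $P$. Moreover, for every $r_0 \in (1-\pi_0,1)$ the set $\cV_{r_0}$ of eigenvalues $\lambda$ with $r_0\leq|\lambda|<1$ is empty by the computation above, whence $\rho_V(P)\leq r_0$; letting $r_0\searrow 1-\pi_0$ yields $\rho_V(P)\leq 1-\pi_0$, as claimed.

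There is essentially no technical obstacle: the only point that requires care is checking that the case $\lambda=1-\pi_0$ does not secretly carry an eigenvector, which is automatic because the strict inequality $|\lambda|>1-\pi_0$ excludes it; and verifying that the constant function $1_{\X}$ really lies in $\cB_V$, which is trivial since $V\geq 1$.
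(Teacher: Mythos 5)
Your proposal is correct and follows essentially the same route as the paper: verify the weak drift condition with $\delta=1-\pi_0$, invoke Proposition~\ref{cor-qc-bis} to bound $r_{ess}(P)$, solve the eigen-equation explicitly to show $\lambda=1$ is the only eigenvalue of modulus exceeding $1-\pi_0$ (with eigenspace $\C\cdot 1_{\X}$), and conclude via Proposition~\ref{CNS-qc-Vgeo}. The only cosmetic difference is that the paper derives the quadratic $\lambda^2-\lambda=0$ for all eigenvalues and then discards $\lambda=0$, while you restrict to the annulus $1-\pi_0<|\lambda|\le 1$ from the outset; the paper also appeals to irreducibility for simplicity of $1$, whereas you read it off directly from the constancy of the eigenvector, which is equally valid.
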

It can be checked that $P$ is not stochastically monotone so that the estimate $\rho_V\leq 1-\pi_0$ cannot be directly deduced from \cite{LunTwe96}. 

\noindent\begin{proof}{}
From $(PV)(0) =  \pi(V)$ and $\forall n\geq1,\ (PV)(n) =  \pi_0 V(0) + (1-\pi_0)V(n) $, it follows that  
$$PV \leq (1-\pi_0) V + (\pi(V)+\pi_0)\, 1_{\X}.$$
That is, Condition~(\ref{cond-D}) holds true with $N:=1$, $\delta:=1-\pi_0$ and $d:=\pi(V) + \pi_0$.
The inequality $r_{ess}(P)\leq 1-\pi_0$ is deduced from Proposition~\ref{cor-qc-bis}. 

Let $\lambda\in\C$ be an eigenvalue of $P$ and $f:=\{f(n)\}_{n\in\N}$ be a non trivial associated eigenvector. Then  
\begin{equation}
\lambda\, f(0) = \sum_{n=0}^{+\infty}\pi_n f(n) \qquad \text{and} \qquad
\forall n\geq1,\quad \lambda\, f(n) = \pi_0f(0)	+ (1-\pi_0)f(n). \label{fct-pr-2-bis}
\end{equation}
This gives: $\forall n\geq1,\quad f(n) = f(0) \pi_0/(\lambda-1+\pi_0)$. 
Since $f\neq0$, it follows from the first equality in (\ref{fct-pr-2-bis}) that  
$$\lambda = \pi_0	+ \frac{\pi_0}{\lambda-1+\pi_0}(1-\pi_0) ,$$
which is equivalent to $\lambda^2-\lambda=0$. Thus, $\lambda=1$ or $0$. That $1$ is a simple eigenvalue is standard from the irreducibility of $P$. The result follows from Proposition~\ref{CNS-qc-Vgeo}.
\end{proof}

A specific instance of this model is considered in \cite[p.~68]{Ros96}. 
Let $\{w_n\}_{n\geq1}$ be a sequence of positive scalars such that $\sum_{n\geq1} w_n=1/2$. Then $P$ is given by 
$$\forall n\in\N,\quad P(n,n)=1/2\quad \text{and} \quad  \forall n\ge 1,\  P(0,n)=w_n,\ P(n,0)=1/2$$
which is reversible with respect to its invariant probability distribution $\pi$ defined by $\pi_0:=1/2$ and $\pi_n:=w_n$ for $n\geq 1$. It has been proved in  \cite[p.~68]{Ros96} that, for any  $X_0\sim \alpha\in\ell^2(1/\pi)$, there exists a constant $C_{\alpha,\pi}>0$ such that
\begin{equation} \label{ineg-Ros-96}
\|\alpha P^n - \pi\|_{TV} \leq C_{\alpha,\pi}\, \left(3/4\right)^n
\end{equation}
where $\|\cdot\|_{TV}$ is the total variation distance. Since we know that $\rho_2(P)\le \rho_V(P)$ from \cite{Bax05} and $\rho_V(P)\le 1/2$ from Proposition~\ref{pro-rhoV-ex}, the rate of convergence in  (\ref{ineg-Ros-96}) is improved.  

\paragraph*{Acknowledgment} 
The authors thank Denis Guibourg for stimulating discussions about this work.

\end{document}